\definecolor{darkblue}{rgb}{0.2,0.2,0.6}
\definecolor{darkred}{rgb}{0.6,0.1,0.1}
\DeclareMathOperator*{\essup}{ess\,sup}
\DeclareMathOperator*{\esinf}{ess\,inf}
\newcommand{\ie}{\emph{i.e.}}
\newcommand{\eg}{\emph{e.g.}}
\newcommand{\cf}{\emph{cf.}}
\newcommand\nb{\nabla}
\newcommand{\beq}{\begin{equation} \begin{split}}
\newcommand{\eeq}{\end{split} \end{equation}}
\newcommand\Omg{\Omega}
\renewcommand\and{\qquad\text{and}\qquad}
\newcommand\sm{\setminus}
\newcommand{\comm}[1]{}
\def\sfH{\mathsf{H}}
\def\bm1{\mathbbm{1}}
\def\p{\partial}
\def\arr{\rightarrow}
\def\tt{\theta}
\def\aa{\alpha}
\def\ii{{\mathsf{i}}}
\def\p{\partial}
\def\sfH{\mathsf{H}}
\def\dd{{\,\mathrm{d}}}
\def\der{{\mathrm{d}}}
\newcounter{counter_a}
\newenvironment{myenum}{\begin{list}{{\rm(\roman{counter_a})}}%
{\usecounter{counter_a}
\setlength{\itemsep}{1.ex}\setlength{\topsep}{0.8ex}
\setlength{\leftmargin}{5ex}\setlength{\labelwidth}{5ex}}}{\end{list}}
\numberwithin{figure}{section}
\numberwithin{equation}{section}
\theoremstyle{plain}
\newtheorem*{thm*}{Theorem}
\newtheorem{thm}{Theorem}[section]
\newtheorem{lem}[thm]{Lemma}
\newtheorem{prop}[thm]{Proposition}
\newtheorem{cor}[thm]{Corollary}
\newtheorem{conj}[thm]{Conjecture}
\theoremstyle{remark}
\newtheorem{remark}[thm]{Remark}
\theoremstyle{plain}
\newcommand{\beu}{\begin{equation*}}
\newcommand{\eeu}{\end{equation*}}
\newcommand{\besu}{\begin{equation*}
\begin{aligned}}
\newcommand{\eesu}{\end{aligned}
\end{equation*}}
\newcommand{\bes}{\begin{equation}
\begin{aligned}}
\newcommand{\ees}{\end{aligned}
\end{equation}}
\newcommand\cA{\mathcal A}
\newcommand\cB{\mathcal B}
\newcommand\cH{\mathcal H}
\newcommand\frq{\mathfrak q}
\newcommand\frh{\mathfrak h}
\newcommand\ov{\overline}
\newcommand\wt{\widetilde}
\newcommand\wh{\widehat}
\DeclareMathOperator\dom{dom}
\DeclareMathOperator\ran{ran}
\newcommand\void[1]{}
\def\ov{\overline}
\def\eps{\varepsilon}
\def\ran{{\rm ran\,}}
\def\frt{{\mathfrak t}}
      \def\dC{{\mathbb C}}
      \def\dR{{\mathbb R}}
   \def\dZ{{\mathbb Z}}
   \def\sfH{{\mathsf H}}
   \def\sfT{{\mathsf T}}
\def\cA{{\mathcal A}}   \def\cB{{\mathcal B}}   
   \def\cE{{\mathcal E}}   
   \def\cH{{\mathcal H}}   
      \def\cO{{\mathcal O}}
      \def\cU{{\mathcal U}}
\renewcommand{\div}{\mathrm{div}\,}
\newcommand{\Tr}{\mathrm{Tr}\,}
\definecolor{DarkGreen}{rgb}{0,0.5,0.1}
\definecolor{DarkBlue}{rgb}{0,0.1,0.5}
\newcommand\soutD{\bgroup\markoverwith
	{\textcolor{DarkGreen}{\rule[.5ex]{2pt}{1pt}}}\ULon}
\newcommand\soutP{\bgroup\markoverwith
	{\textcolor{blue}{\rule[.5ex]{2pt}{1pt}}}\ULon}
\newcommand{\Hm}[1]{\leavevmode{\marginpar{\tiny%
			$\hbox to 0mm{\hspace*{-0.5mm}$\leftarrow$\hss}%
			\vcenter{\vrule depth 0.1mm height 0.1mm width \the\marginparwidth}%
			\hbox to
			0mm{\hss$\rightarrow$\hspace*{-0.5mm}}$\\
			\relax\raggedright #1}}}
\newcommand\essinf{{\rm ess\,inf}\,}
\newcommand\esssup{{\rm ess\,sup}\,}
\newcommand{\bA}{\mathbf{A}}
\theoremstyle{remark}
\newtheorem{exam}[thm]{Example}
\begin{document}

\title[A geometric bound via the torsion function]{A geometric bound on the lowest magnetic Neumann eigenvalue via the torsion function}

\author[A.~Kachmar]{Ayman Kachmar}
\address{(A.~Kachmar) 
The Chinese University of Hong Kong (Shenzhen),  School of Science and Engineering,  Shenzhen 518172,  
China}
\email{akachmar@cuhk.edu.cn}
\author[V.~Lotoreichik]{Vladimir Lotoreichik}
\address{(V.~Lotoreichik)
	Department of Theoretical Physics\\
	Nuclear Physics Institute, Czech Academy of Sciences, 
	25068, \v{R}e\v{z}, Czech Republic
}
\email{lotoreichik@ujf.cas.cz}
\subjclass{}

\begin{abstract}
We obtain an upper bound on the lowest magnetic Neumann eigenvalue of a bounded, convex, smooth, planar domain
with moderate intensity of the homogeneous magnetic field. 
This bound is given as a product of a purely geometric factor expressed in terms of the torsion function and of the lowest magnetic Neumann eigenvalue of the disk having the same maximal value of the torsion function as the domain.
The bound is sharp in the sense that equality is attained for disks. 
Furthermore, we derive from our upper bound that the lowest magnetic Neumann eigenvalue with the homogeneous magnetic field is maximized by the disk among all ellipses of fixed area provided that the intensity of the magnetic field does not exceed an explicit constant dependent only on the fixed area.  
\end{abstract}

\keywords{}

\maketitle

\section{Introduction}
\subsection{Motivation and background}
Geometric bounds on the eigenvalues are one of the central topics in spectral geometry with many contributions and challenging open problems. A prominent class of such bounds constitute spectral isoperimetric inequalities, which can be viewed as optimization of an eigenvalue under certain geometric constraints.
In the seminal papers by
Faber~\cite{F} and Krahn~\cite{K, K2}, it was proved that among all bounded domains of fixed volume the ball minimizes the lowest Dirichlet eigenvalue of the Laplacian. For the Neumann Laplacian the lowest eigenvalue on a bounded domain is always equal to zero. The natural question is thus to optimize the first non-zero eigenvalue. It was shown by Szeg\H{o}~\cite{S54} in two dimensions and soon after by Weinberger~\cite{W56} in higher dimensions that the first non-zero Neumann eigenvalue of the Laplacian is maximized by the ball among all bounded domains of fixed volume. Other types of geometric bounds on the eigenvalues involve also purely geometric characteristics of the domain such as volume, area of the boundary, in-radius, diameter, and others; see \eg~\cite{FK08, PS51}
for upper bounds on the lowest Dirichlet eigenvalue  and \eg~\cite{B03, PW60} for a lower bound on the first non-zero Neumann eigenvalue.

Geometric bounds on the eigenvalues and spectral optimization for the
two-dimensional magnetic Laplacian  attracted considerable attention in the recent years. L.~Erd\H{o}s~\cite{E} proved
an analogue of the Faber-Krahn inequality for the two-dimensional magnetic Laplacian with Dirichlet boundary conditions and homogeneous magnetic field. A quantitative version of the inequality by Erd\H{o}s has been recently obtained in~\cite{GJM23} by Ghanta, Junge, and Morin.  Much less is known beyond Dirichlet boundary conditions.  For the magnetic Laplacian, the lowest Neumann eigenvalue is different from zero and depends on the shape of the domain. Its optimization is thus a meaningful problem unlike in the non-magnetic case.  Under an Aharonov-Bohm magnetic potential,  fixing the area,  it is proved in \cite{CPS} that the lowest magnetic Neumann eigenvalue is maximized by the disk with the pole of the Aharonov-Bohm potential at its center. 
The same optimization problem under a homogeneous magnetic field is more subtle. Based on the analysis of the asymptotic expansions of the lowest magnetic Neumann eigenvalue in the regimes of weak and strong magnetic fields, Fournais and Helffer conjectured in~\cite{FH} that among all simply-connected planar domains of fixed area the disk is a maximizer of the lowest magnetic Neumann eigenvalue for the homogeneous magnetic field. An optimization result of a different flavour for the counting function of the magnetic Neumann Laplacian at the points of the Landau levels
has been obtained in~\cite{FFGKS23}. Various geometric bounds on the lowest magnetic Neumann eigenvalue
in two dimensions are derived in~\cite{CLPS23}. 

Along with Dirichlet and Neumann boundary conditions, the Robin boundary conditions for the magnetic Laplacian are frequently studied.
The authors of the present paper considered in~\cite{KL22} optimization for the two-dimensional magnetic Robin Laplacian with a negative boundary parameter.  In particular, we proved in that paper that the disk is a maximizer of the lowest magnetic Robin eigenvalue among all convex centrally symmetric domains under fixed perimeter constraint and certain restrictions on the intensity of the homogeneous magnetic field.   The convexity assumption is removed in a subsequent paper~\cite{DKL23} in collaboration with Dietze.

The analysis in the present paper is inspired by the
above-mentioned conjecture due to Fournais and Helffer on the optimization of the lowest magnetic Neumann eigenvalue in two dimensions. We obtain an upper bound on the lowest magnetic Neumann eigenvalue with homogeneous magnetic field for planar convex domains in terms of the torsion function under the assumption that the intensity of the magnetic field does not exceed an explicit constant dependent on the area of the domain.
By testing the bound on ellipses, for which the torsion function is known explicitly, we get
under our restriction on the intensity of the magnetic field an inequality, which
is stronger than Fournais-Helffer conjecture in this special geometric setting. We expect that the obtained upper bound implies Fournais-Helffer conjecture for a larger class of convex domains. We have also found examples of planar convex domains, for which our bound gives an estimate, which is weaker than the Fournais-Helffer conjecture.  

\subsection{Magnetic Neumann Laplacian}
Let $\Omg\subset\dR^2$ be a bounded, simply-connected $C^\infty$-smooth domain. Let the intensity of the magnetic field $b >0$ be fixed. Let us also fix the gauge of the homogeneous magnetic field by
\[
	\wt\bA(x) := \frac{b}{2}\left(-x_2,x_1\right)^\top.
\]
Consider the symmetric quadratic form in the Hilbert space $L^2(\Omg)$
\[
	H^1(\Omg)\ni u \mapsto \int_{\Omg} |(-\ii\nb -\wt\bA)u|^2\dd x.
\]
This form is closed, densely defined, and non-negative 
(see~\cite[\S 1.2]{FH-b}). Thus, it defines a self-adjoint operator in the Hilbert space $L^2(\Omg)$, which we regard as the magnetic Neumann Laplacian on $\Omg$ with the homogeneous magnetic field of intensity $b$. In our analysis we work with a different gauge of the magnetic field, but for the purpose of the introduction we prefer to define the operator in a simpler gauge. Recall that the spectrum of the magnetic Neumann Laplacian on a simply-connected planar domain is invariant under the change of the gauge; \cf~\cite[Appendix D]{FH-b} and also the discussion in~\cite[Section 2]{KL22}.

The study of the magnetic Neumann Laplacian is motivated by applications in the theory of superconductivity; see the monograph~\cite{FH-b} and the references therein.
The spectrum of the magnetic Neumann Laplacian on $\Omg$ is purely discrete thanks to compactness of the embedding of $H^1(\Omg)$ into $L^2(\Omg)$ and we denote by $\mu_1^b(\Omg) > 0$ its lowest eigenvalue. We are interested in geometric bounds on $\mu_1^b(\Omg)$ and its optimization with respect to the shape of $\Omg$.
As we already mentioned, our analysis is inspired by the conjecture
due to Fournais and Helffer~\cite[Equation (1.8)]{FH}, which we state below in full detail. 
\begin{conj}\label{conj:FH}
	Let $\Omg\subset\dR^2$ be a bounded, simply-connected
	$C^\infty$-smooth domain and let $\cB\subset\dR^2$ be the disk of the same area as $\Omg$. Then the following inequality
	\begin{equation*}
		\mu_1^b(\Omg)\le \mu_1^b(\cB)
	\end{equation*}
	holds for all $b > 0$.
\end{conj}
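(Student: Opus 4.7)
The plan is to partition the range $b\in(0,\infty)$ into three regimes and handle each by a different technique, then glue the estimates together. Let $\mathcal{B}\subset\dR^2$ denote the disk with $|\mathcal{B}|=|\Omg|$. First I would fix $\Omg$ and consider $f_\Omg(b) := \mu_1^b(\mathcal{B}) - \mu_1^b(\Omg)$, a continuous function on $[0,\infty)$ with $f_\Omg(0)=0$.

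\emph{Weak-field regime.} For $b$ below an explicit threshold $b_0(|\Omg|)$, I would invoke the torsion-function upper bound that is the main result of the present paper. Since the maximum of the torsion function is minimized by the disk of fixed area (Saint-Venant inequality), the bound collapses, after area-normalization, to $\mu_1^b(\Omg)\le \mu_1^b(\mathcal{B})$ for all $b\le b_0(|\Omg|)$. This settles $f_\Omg(b)\ge 0$ on $[0,b_0(|\Omg|)]$ and, as a bonus, in the asymptotic regime $b\to 0^+$ the inequality follows from the Fournais-Helffer weak-field two-term expansion, which is controlled by $|\Omg|^2$ (maximized by the disk under the area constraint).

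\emph{Strong-field regime.} For $b\to\infty$ one has the Helffer-Morame/Fournais-Helffer semiclassical expansion
\[
\mu_1^b(\Omg) = \Theta_0\, b - C_1\, \kappa_{\max}(\partial\Omg)\, b^{1/2} + o(b^{1/2}),
\]
with universal constants $\Theta_0\in(0,1)$ and $C_1>0$, and $\kappa_{\max}$ the maximum of the boundary curvature. Among simply-connected smooth domains of fixed area the disk minimizes $\kappa_{\max}$ (by Blaschke's rolling-ball comparison: if $\kappa_{\max}(\partial\Omg)<(\pi/|\Omg|)^{1/2}$ then a disk of area strictly larger than $|\Omg|$ would fit inside $\Omg$). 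Consequently, for each $\Omg$ there is $b_1(\Omg)$ beyond which $f_\Omg(b)>0$. A refined expansion together with uniform error control on a compact family of competitors is required to make $b_1$ depend only on $|\Omg|$ (and some geometric normalization), but in principle the strong-field side is under control.

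\emph{Intermediate regime and main obstacle.} The decisive difficulty lies in the bounded window $[b_0(|\Omg|),b_1(\Omg)]$, where neither asymptotic expansion nor the torsion-function bound applies. My approach here would be variational: take the normalized ground state $\varphi_\mathcal{B}$ on $\mathcal{B}$ in the rotationally symmetric gauge and transplant it to $\Omg$ via a measure-preserving map $T\colon \mathcal{B}\to\Omg$, producing the test function $u = (\varphi_\mathcal{B}\circ T^{-1})\,e^{\ii\chi}$ with a phase $\chi$ chosen to cancel the leading discrepancy $T^*\wt\bA - \wt\bA$ on circles of symmetry (\ie~$\chi$ solves a Poisson-type equation whose right-hand side vanishes on the disk). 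The Rayleigh quotient of $u$ is then $\mu_1^b(\mathcal{B})$ plus a quadratic defect controlled by the deviation of $T$ from an isometry. This is the main obstacle: unlike Szeg\H{o}-Weinberger, the magnetic phase creates oscillatory cross-terms that are not obviously non-negative, so establishing the defect has the correct sign (or is dominated by a quantity made small by the torsion-function inequality over the compact $b$-window) appears to require either a quantitative stability version of the torsion bound of Section~2 propagating to larger $b$, or a delicate monotonicity-of-$f_\Omg$ argument via a shape-derivative analysis along a convex interpolation $\Omg_t = tT(\mathcal{B}) + (1-t)\mathcal{B}$. Making this quantitative uniformly in $\Omg$ is the step I expect to be hardest and is where any honest attempt at the full conjecture must invest the bulk of the technical work.
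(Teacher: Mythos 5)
The statement you were asked to prove is Conjecture~\ref{conj:FH}, which remains open: the paper does not prove it and only establishes partial results (an upper bound for \emph{convex} domains with $b|\Omg|<\pi$, and the conjectured inequality for ellipses under $b\aa\beta<1$). Your proposal is likewise not a proof, and beyond the honestly flagged gap in the intermediate regime it contains a concrete error in the weak-field step. You claim the torsion-function bound ``collapses'' to $\mu_1^b(\Omg)\le\mu_1^b(\cB)$ for small $b$. First, the Saint-Venant/Talenti comparison goes the other way from what you wrote: the disk \emph{maximizes} $\max v_\Omg$ at fixed area (this is~\eqref{eq:max_torsion}), which is indeed what gives $\rho(\Omg)\le R$ and hence $\mu_1^b(\cB_{\rho(\Omg)})\le\mu_1^b(\cB)$. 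But the bound of Theorem~\ref{thm:main} carries the prefactor $C(\Omg)=\frac{1}{\max v_\Omg}\frac{G(\Omg)}{F(\Omg)}\ge 1$, and the paper exhibits in Example~\ref{ex:square} a family of convex domains (e.g.\ $\{x_1^4+x_2^4<1\}$) for which $C(\Omg)>1$ makes the resulting estimate \emph{strictly weaker} than the conjectured inequality for all sufficiently small $b$. So the torsion bound does not settle even the weak-field regime; only the genuine two-term expansion of $\mu_1^b$ as $b\to0^+$ (whose leading coefficient is not ``$|\Omg|^2$,'' which is fixed by the constraint, but a gauge-minimized $L^2$-norm of the vector potential) yields the asymptotic statement, and then only for $b$ below a threshold depending on $\Omg$, not on $|\Omg|$ alone.

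Two further gaps: Theorem~\ref{thm:main} requires convexity, whereas the conjecture concerns all simply-connected smooth domains, so your weak-field regime would in any case not cover the conjectured class; and your intermediate-regime transplantation argument is, as you acknowledge, only a program --- the sign of the magnetic defect term is precisely the unresolved heart of the problem. The strong-field heuristic via $\Theta_0 b - C_1\kappa_{\max}b^{1/2}$ is the correct known motivation (it is how Fournais and Helffer arrived at the conjecture), but it gives a threshold $b_1(\Omg)$ with no uniform control. In short: what you have is a reasonable survey of why the conjecture is plausible at both ends of the $b$-axis, not a proof, and the one place where you assert a complete argument (small $b$ via Theorem~\ref{thm:main}) is contradicted by the paper itself.
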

\subsection{On the main results}
Under the assumption $b|\Omg| < \pi$ we obtain in Theorem~\ref{thm:main} below an upper bound on $\mu_1^b(\Omg)$ for convex $\Omg$. In the following, we denote by $R > 0$ the radius of the disk $\cB\subset\dR^2$ of the same area as $\Omg$. Our bound can be written as
\begin{equation}\label{eq:equivalent_bound}
	\mu_1^b(\Omg) \le C(\Omega)\mu_1^b(\cB_{\rho(\Omg)}), 
\end{equation}
where the radius $\rho(\Omg) \le R$ of the disk
$\cB_{\rho(\Omg)}\subset\dR^2$ and the constant \(C(\Omega) \ge1\) depend on the domain \(\Omega\) only.
We remark that the bound in~\eqref{eq:equivalent_bound} is sharp in the sense that $\rho(\Omg) = R$ and $C(\Omg) = 1$ when \(\Omega\) is a disk. The radius $\rho(\Omg)$ is
such that the maximum of the torsion function for $\cB_{\rho(\Omg)}$ coincides with that for $\Omg$. Thus, $\rho(\Omg)$ can be explicitly expressed through the maximal value of the torsion function for $\Omg$. The expression for $C(\Omg)$ involves certain integrals of the  modulus of the gradient and of its inverse of the torsion function on its level sets, which
can be expressed in terms of the areas of superlevel sets of the torsion function for $\Omg$.
The bound~\eqref{eq:equivalent_bound} is obtained by constructing a trial function whose level sets are the same as those of the torsion function.

The assumption $b|\Omg| < \pi$ ensures that to the lowest eigenvalue of the magnetic Neumann Laplacian on the disk $\cB_{\rho(\Omg)}$ corresponds a radial real-valued eigenfunction. The assumption $b|\Omg| < \pi$ is not optimal and can be replaced by a more relaxed assumption, which guarantees the same property of the ground state for the disk of radius $\rho(\Omg)$. 

We also establish a new monotonicity property of the lowest eigenvalue for the disk, which was conjectured in \cite{CLPS23} based on numerical evidence. Using this property we infer that $\mu_1^b(\cB_{\rho(\Omg)}) \le \mu_1^b(\cB)$ and we get as a consequence of~\eqref{eq:equivalent_bound}
the following simplified bound
\[
	\mu_1^b(\Omg) \le C(\Omg)\mu_1^b(\cB),
\]
where as before $\cB\subset\dR^2$ is the disk of the same area as $\Omg$.

For certain special domains such as ellipses the torsion function is known explicitly. 
Let us define the ellipse with semi-axes $\aa,\beta >0$  by
\[
	\cE_{\aa,\beta} := \left\{(x,y)\in\dR^2\colon \frac{x^2}{\aa^2}+\frac{y^2}{\beta^2} < 1\right\}.
\]
Assume also that $\cE_{\aa,\beta}$ is not a disk, \ie~ $\alpha\not=\beta$. We verify that for the ellipse $\cE_{\aa,\beta}$ we have in~\eqref{eq:equivalent_bound} the following values for the constants
\[
C(\cE_{\aa,\beta}) = 1\quad\text{and}\quad \rho(\cE_{\aa,\beta}) = \frac{\sqrt{2}\aa\beta}{\sqrt{\aa^2+\beta^2}} < \sqrt{\aa\beta}.
\]
As a consequence of our general upper bound  we prove under the assumption $b\aa\beta < 1$ the following inequalities  
\begin{equation}\label{eq:ellipse_intro}
	\mu_1^b(\cE_{\aa,\beta})\le \mu_1^b(\cB_{\rho(\cE_{\aa,\beta})}) < \mu_1^b(\cB),
\end{equation}
where $\cB\subset\dR^2$ is the disk such that $|\cB| = |\cE_{\aa,\beta}| = \pi\aa\beta$. 
In other words, we have shown that among all ellipses of fixed area the disk is the unique maximizer of the lowest magnetic Neumann eigenvalue provided that the intensity of the magnetic field is below an explicit constant, which only depends on the fixed area.

We expect that the upper bound~\eqref{eq:equivalent_bound} yields the inequality in Conjecture~\ref{conj:FH} under the assumption $b|\Omg| < \pi$ for a class of domains much wider than ellipses. Informally speaking, the fact that in some cases $C(\Omg) > 1$  can be compensated by the fact that $\rho(\Omg) < R$.  Slight deformations of ellipses should be allowed by continuity.  
Derivation of Conjecture~\ref{conj:FH} from the inequality~\eqref{eq:equivalent_bound} for a class of convex domains other than ellipses would require more precise estimates for the torsion function. We have also identified a class of planar convex domains for which the bound in~\eqref{eq:equivalent_bound} is weaker than 
the bound in Conjecture~\ref{conj:FH} for all sufficiently small $b > 0$. A typical example of such a `bad' domain is given by $\{(x,y)\in\dR^2\colon x^4 + y^4 < 1\}$ (see Example~\ref{ex:square}).

\subsection*{Structure of the paper}
In Section~\ref{sec:prelim} we provide preliminary material on the torsion function  and the magnetic Neumann Laplacian, which will be used in the formulations and the proofs of the main results.  In Section~\ref{sec:bound} we obtain an upper bound on the lowest magnetic Neumann eigenvalue for planar, convex, smooth domains with moderate intensity of the magnetic field.   In Section~\ref{sec:geom-cst} we study the geometric quantities that appear in our new eigenvalue upper bound.  
Finally, Section~\ref{sec:FH} is devoted to the proof of the Fournais-Helffer conjecture for ellipses.
\section{Preliminaries}\label{sec:prelim}
\subsection{Torsion function of a general domain}\label{ssec:torsion}
The aim of this subsection is to collect properties of the torsion function of a planar domain, which will be used in the proof of our main results. The literature on the torsion function is quite extensive; see
the papers~\cite{B20, BBK22, BMS22, HNPT18, HS21,  K85, KL87, M-L71, S18}, the monograph~\cite{Sp81}, and the references therein.

Let $\Omg\subset\dR^2$ be a bounded, simply-connected, $C^\infty$-smooth domain. We denote by $\p\Omg$ the boundary of $\Omg$ and by $\nu$ the outer unit normal to $\Omg$. 
For the domain $\Omg$, we define the torsion function $v_\Omg \colon\Omg\arr\dR$ as the unique solution
in the Sobolev space $H^1_0(\Omg)$ of the boundary value problem 
\begin{equation}\label{eq:torsion}
	\begin{cases}
		-\Delta v_\Omg = 1,&\text{in}\,\,\Omg,\\
		v_\Omg = 0, &\text{on}\,\,\p\Omg.
	\end{cases}	
\end{equation}
By standard elliptic regularity theory we infer that $v_\Omg\in C^\infty(\ov\Omg)$.
The function $v_\Omg$ is positive in $\Omg$ by the maximum principle for subharmonic functions~\cite[Theorem 8.1]{GT} applied to $-v_\Omg$. Talenti's comparison principle~\cite[Theorem 3.1.1]{Ke} yields that
\begin{equation}\label{eq:max_torsion}
	\max v_\Omg \le \max v_\cB = \frac{R^2}{4},
\end{equation}
where $\cB = \cB_R\subset\dR^2$ is the disk of the same area as $\Omg$. Here we used that the torsion function for the disk $\cB_R$ is explicitly given in polar coordinates by $v_\cB(r) = \frac{R^2-r^2}{4}$. The inequality in~\eqref{eq:max_torsion} is strict if $\Omg$ is not congruent to $\cB$ (see~\cite[Propositions 3.2.1 and 3.2.2]{Ke}).
For the torsion function $v_\Omg$ we introduce the level sets 
\begin{equation}\label{eq:Ar}
	\cA_r := \left\{x\in\Omg\colon v_\Omg(x) = \max v_\Omg\Big(1-r^2\Big)\right\},\qquad r\in(0,1).
\end{equation}
The selected normalization of the level sets for $v_\Omg$ is adjusted for our application to the analysis of the magnetic Neumann Laplacian. We also define the superlevel sets 
\begin{equation}\label{eq:Omgr}
\Omg_r := \left\{x\in\Omg\colon v_\Omg(x) >\max v_\Omg \big(1-r^2\big)\right\},\qquad r\in(0,1).
\end{equation}
In the rest of this subsection we assume in addition that $\Omg$ is convex.
In the next proposition we collect known properties of the torsion function for a planar convex smooth domain.
\begin{prop}\label{prop:convex}
	Let $\Omg\subset\dR^2$ be a bounded, convex, $C^\infty$-smooth domain. Let $v_\Omg\in C^\infty(\ov\Omg)$ be the torsion function of $\Omg$ defined as the unique solution of the boundary value problem~\eqref{eq:torsion}. Then the following hold.
	\begin{myenum}
		\item $\sqrt{v_\Omg}$ is strictly concave. In particular, $v_\Omg$ has a unique global maximum
		at some point $x_\star\in\Omg$, the level sets of $v_\Omg$ defined in~\eqref{eq:Ar} are boundaries of convex domains, and $\nb v_\Omg(x)\ne 0$ for all $x\in\Omg\sm\{x_\star\}$.
		\item The Hessian $\cH_\star\in\dR^{2\times 2}$ of $v_\Omg$ at the point of its maximum is negative definite.
		\item $|\nb v_\Omg(x)|^2 \le 
		2(\max v_\Omg - v_\Omg(x))$ for any $x\in\Omg$.
		\item $\max v_\Omg - v_\Omg(x) \le \frac12|x-x_\star|^2$ for any $x\in\Omg$.
	\end{myenum}
\end{prop}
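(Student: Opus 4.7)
My plan is to establish the four properties in the order (i), (iii), (iv), (ii), since each of (iii) and (iv) builds on the preceding ones while (ii) is the most delicate. Property (i) rests on the classical Makar-Limanov concavity theorem \cite{M-L71}: for any bounded planar convex domain, $\sqrt{v_\Omg}$ is strictly concave on $\Omg$. Granted this, the three assertions of (i) follow by short direct arguments. The uniqueness of the maximum $x_\star$ of $v_\Omg$ is immediate from that of $\sqrt{v_\Omg}$. The super-level sets $\{v_\Omg>c\}=\{\sqrt{v_\Omg}>\sqrt c\}$ are convex as super-level sets of a concave function, so the level sets $\cA_r$ bound convex domains. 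Finally, for $y\in\Omg\sm\{x_\star\}$, the restriction of $\sqrt{v_\Omg}$ to the segment from $x_\star$ to $y$ is a strictly concave univariate function whose unique maximum is at $x_\star$, hence strictly decreasing away from $x_\star$; its directional derivative at $y$ is therefore nonzero, so $\nb v_\Omg(y)\ne 0$.

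For (iii), I would use a Payne-type $P$-function argument based on (i). Set $P:=|\nb v_\Omg|^2+2(v_\Omg-M)$ with $M:=\max v_\Omg$, so that $P(x_\star)=0$ and (iii) is equivalent to $P\le 0$ on $\Omg$. At any $x\in\Omg\sm\{x_\star\}$, testing the negative semi-definite Hessian of $\sqrt{v_\Omg}$ against a unit vector $T\perp\nb v_\Omg$ yields $(\nb^2 v_\Omg)(T,T)\le 0$ (the rank-one term in the Hessian of $\sqrt{v_\Omg}$ drops out by orthogonality); combining this with $\Delta v_\Omg=-1$ gives $(\nb^2 v_\Omg)(N,N)\ge -1$, where $N:=\nb v_\Omg/|\nb v_\Omg|$. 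A direct computation then shows
\[
\nb P\cdot\nb v_\Omg \;=\; 2|\nb v_\Omg|^2\bigl((\nb^2 v_\Omg)(N,N)+1\bigr)\;\ge\;0,
\]
so $P$ is non-decreasing along the forward gradient flow of $v_\Omg$. By (i), each trajectory of this flow stays inside $\Omg$ (since $v_\Omg$ strictly increases along it and is bounded away from $0$) and converges to $x_\star$, where $P$ vanishes; hence $P\le 0$ throughout $\Omg$, giving (iii). Property (iv) then follows at once: letting $g:=\sqrt{M-v_\Omg}\in C(\ov\Omg)\cap C^\infty(\Omg\sm\{x_\star\})$, the bound (iii) gives $|\nb g|=|\nb v_\Omg|/(2\sqrt{M-v_\Omg})\le 1/\sqrt 2$ on $\Omg\sm\{x_\star\}$, so $g$ is $\tfrac{1}{\sqrt 2}$-Lipschitz on $\ov\Omg$; combined with $g(x_\star)=0$, this yields $g(x)\le |x-x_\star|/\sqrt 2$, which squares to (iv).

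The main obstacle is (ii), specifically ruling out a zero eigenvalue of $\cH_\star$. The negative semi-definiteness and the trace identity $\operatorname{tr}\cH_\star=-\Delta v_\Omg(x_\star)=-1$ are immediate, but the strict concavity of $\sqrt{v_\Omg}$ from (i) does not by itself preclude a degenerate Hessian at the critical point (a smooth strictly concave function may have a vanishing second derivative at its maximum). My plan is to invoke a refinement of the Makar-Limanov concavity for the specific equation $-\Delta u\equiv 1$ on convex domains, which asserts that the Hessian at the unique interior critical point is strictly negative definite; see, e.g., \cite{K85,KL87,Sp81}. A direct argument is also conceivable: assuming $\cH_\star e=0$ for a unit eigenvector $e$, one Taylor expands $v_\Omg-M-\tfrac12 y^\top\cH_\star y$ at $x_\star$ as a sum of harmonic homogeneous polynomials (harmonicity being forced by $-\Delta v_\Omg\equiv 1$); the maximum property at $x_\star$ then kills the cubic term along $e$, and the resulting anisotropic scaling of the super-level sets $\{v_\Omg>M-\varepsilon\}$ (widths of order $\varepsilon^{1/2}$ in $e^\perp$ versus $\varepsilon^{1/4}$ in $e$), combined with their convexity from (i), can be pushed to a contradiction.
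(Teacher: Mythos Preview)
Your proposal is correct and considerably more self-contained than the paper's treatment. The paper does not actually prove Proposition~\ref{prop:convex}: it merely records citations---\cite{M-L71,K85} and \cite{KL87} for (i), \cite[Theorem~1]{S18} for (ii), and \cite[\S6.1]{Sp81} for (iii) and (iv)---and moves on. You instead reproduce the underlying arguments: the consequences of concavity of $\sqrt{v_\Omega}$ for (i), the Payne--Sperb $P$-function monotonicity along the gradient flow for (iii), and the Lipschitz bound on $\sqrt{M-v_\Omega}$ for (iv). These are precisely the classical proofs one finds in the cited sources, and your versions are clean; the gradient-flow convergence to $x_\star$ in (iii) is justified since the trajectory stays in the compact set $\{v_\Omega\ge v_\Omega(x_0)\}$ and $x_\star$ is the unique critical point.

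Two remarks. First, a minor attribution: Makar-Limanov \cite{M-L71} and Kennington \cite{K85} give concavity of $\sqrt{v_\Omega}$, while \emph{strict} concavity (which you use to obtain $\nabla v_\Omega(y)\neq 0$ via strict monotonicity of the directional derivative) is due to Korevaar--Lewis \cite{KL87}. Second, your handling of (ii) is the one soft spot. You rightly note that strict concavity of $\sqrt{v_\Omega}$ does not by itself exclude a zero eigenvalue of $\cH_\star$, but your direct sketch is incomplete: convexity of the super-level sets is perfectly compatible with anisotropic widths $\varepsilon^{1/2}$ and $\varepsilon^{1/4}$ (an ellipse has them), so one needs further input---for instance the constant-rank theorem of \cite{KL87} together with an argument that the Hessian of $\sqrt{v_\Omega}$ has full rank somewhere. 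The paper avoids this by citing \cite[Theorem~1]{S18}; unless you complete the direct argument, you should cite a precise source as well.
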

Concavity of $\sqrt{v_\Omg}$ 
is proved in~\cite{M-L71, K85}. This property is improved in~\cite[Theorem~1]{KL87} to strict concavity stated in item (i) of the above proposition. Negative definiteness of the Hessian
in item (ii) is shown \eg~in~\cite[Theorem 1]{S18}. The estimate of the gradient of the torsion function in item~(iii) and the bound on the torsion function itself in item~(iv) can be found in~\cite[\S 6.1]{Sp81}.

Using properties of the torsion function for planar convex domains we derive an estimate on the length of the level sets in~\eqref{eq:Ar}. In the proof of the below proposition and in the following, we use the notation $\cB_\rho(x)\subset\dR^2$ for the open disk of radius $\rho > 0$ centered at $x\in\dR^2$.   
\begin{prop}\label{prop:Ar}
	Let the assumptions be as in Proposition~\ref{prop:convex} and the level sets $\cA_r$, $r\in(0,1)$, be defined as in~\eqref{eq:Ar}. Then there exist some constants $c_2 > c_1 > 0$, which  depend on $\Omg$, such that
	\[
		c_1 r \le |\cA_r| \le c_2 r,\qquad\text{for all}\,\, r\in (0,1),
	\] 
	where $|\cA_r|$ stands for the one-dimensional Hausdorff measure of $\cA_r$.
\end{prop}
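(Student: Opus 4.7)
The plan is to prove the two inequalities separately, using different items from Proposition~\ref{prop:convex}. Both arguments rest on a common structural fact: by item~(i), each $\Omg_r$ is a convex planar domain with boundary $\cA_r$, so that inclusion-monotonicity of perimeter for planar convex sets applies.

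For the lower bound $|\cA_r|\ge c_1 r$, I would use item~(iv) twice to show that $\Omg_r$ contains a disk of radius proportional to $r$ centered at $x_\star$. Since $v_\Omg\in C^\infty(\ov\Omg)$ and $v_\Omg=0$ on $\p\Omg$, item~(iv) extended by continuity to the boundary gives $\mathrm{dist}(x_\star,\p\Omg)\ge \sqrt{2\max v_\Omg}$, so that the open disk $\cB_{r\sqrt{2\max v_\Omg}}(x_\star)$ is contained in $\Omg$ for every $r\in(0,1)$. Applying item~(iv) again to any point $x$ in this disk yields $\max v_\Omg -v_\Omg(x)\le \tfrac12|x-x_\star|^2<\max v_\Omg\cdot r^2$, so $x\in\Omg_r$. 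Hence $\cB_{r\sqrt{2\max v_\Omg}}(x_\star)\subset \Omg_r$, and monotonicity of perimeter for convex sets gives $|\cA_r|=|\p\Omg_r|\ge 2\pi r\sqrt{2\max v_\Omg}$, which is the desired bound with $c_1:=2\pi\sqrt{2\max v_\Omg}$.

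For the upper bound $|\cA_r|\le c_2 r$ I would split $r\in(0,1)$ into two regimes separated by a fixed threshold $r_0\in(0,1)$. For $r\ge r_0$, the inclusion $\Omg_r\subset\Omg$ combined with perimeter monotonicity gives $|\cA_r|\le|\p\Omg|\le(|\p\Omg|/r_0)\,r$. For $r\le r_0$, I would exploit the non-degeneracy of the Hessian from item~(ii): a second-order Taylor expansion of $v_\Omg$ at $x_\star$ produces a constant $c_\star>0$ and a neighborhood $U\ni x_\star$ in which
\[
    \max v_\Omg - v_\Omg(x)\ge c_\star |x-x_\star|^2.
\]
Since $x_\star$ is the unique global maximum of $v_\Omg$ and $v_\Omg$ is continuous on $\ov\Omg$, the threshold $r_0$ can be chosen small enough that $\Omg_r\subset U$ for all $r\le r_0$. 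On $\cA_r$ the identity $\max v_\Omg - v_\Omg(x)=\max v_\Omg\cdot r^2$ then forces $|x-x_\star|\le r\sqrt{\max v_\Omg/c_\star}$, so $\Omg_r\subset\cB_{r\sqrt{\max v_\Omg/c_\star}}(x_\star)$ and perimeter monotonicity closes the argument. Taking $c_2$ to be the maximum of the two constants obtained completes the proof.

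The main obstacle is the small-$r$ upper bound: item~(ii) provides a quadratic lower bound on $\max v_\Omg-v_\Omg$ only in a neighborhood of the single point $x_\star$, so one must couple the Taylor-type estimate with the uniqueness of the maximum (via a continuity/compactness argument on $\ov\Omg\setminus U$) in order to trap $\Omg_r$ inside that neighborhood once $r$ is sufficiently small. Everything else reduces to the standard perimeter-monotonicity principle for nested planar convex sets.
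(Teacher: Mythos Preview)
Your proposal is correct and follows essentially the same route as the paper: trap $\Omg_r$ between two disks of radius proportional to $r$ using the quadratic behaviour of $\max v_\Omg - v_\Omg$ near $x_\star$ coming from items~(ii) and~(iv), invoke perimeter monotonicity for nested planar convex sets, and handle large $r$ via the trivial bound $|\cA_r|\le|\p\Omg|$. Your lower-bound argument is in fact slightly cleaner than the paper's, since by combining item~(iv) with the boundary condition $v_\Omg|_{\p\Omg}=0$ you obtain the inclusion $\cB_{r\sqrt{2\max v_\Omg}}(x_\star)\subset\Omg_r$ directly for \emph{all} $r\in(0,1)$, whereas the paper first establishes a disk inclusion only for small $r$ and then extends the lower bound to the full range using the monotonicity of $r\mapsto|\cA_r|$.
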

\begin{proof}
	Recall that $x_\star\in\Omg$ is the point at which the torsion function $v_\Omg$ acquires its maximal value. As a direct consequence of the negative definiteness of the Hessian of $v_\Omg$ at the point $x_\star$ stated in Proposition~\ref{prop:convex}\,(ii),
	we get that
	for a sufficiently small $\eps > 0$ there exist constants $a_2 > a_1 > 0$ such that
	\[	
		a_1|x-x_\star|^2\le \max v_\Omg - v_\Omg(x) \le a_2|x-x_\star|^2,\qquad\text{for all}\,\, x\in\cB_\eps(x_\star).
	\]
	Using that $v_\Omg\in C^\infty(\ov\Omg)$ and that $v_\Omg$ has a unique global maximum at $x_\star$ we conclude that one can find $b_2 \in [a_2,\infty)$ and 
	$b_1 \in (0,a_1]$ such that
	\begin{equation}\label{eq:bndb}	
		b_1|x-x_\star|^2\le \max v_\Omg - v_\Omg(x) \le 	b_2|x-x_\star|^2,\qquad\text{for all}\,\, x\in\Omg.
	\end{equation}
	We also remark that by Proposition~\ref{prop:convex}\,(iv) one can choose $b_2 = \frac12$. However, the exact suitable value of $b_2$ is not essential for the argument.

	By the construction, the level set $\cA_r$ is the boundary of $\Omg_r$ in~\eqref{eq:Omgr}. In particular, by Proposition~\ref{prop:convex}\,(i), $\Omg_r$ is convex.
	
	Using the lower bound on $\max v_\Omg - v_\Omg(x)$ in~\eqref{eq:bndb}   
	we get the following inclusion
	\begin{equation}\label{eq:inclusion1}
		\Omg_r \subset \{x\in\Omg\colon 
		b_1|x-x_\star|^2 < r^2\max v_\Omg\} \subset\cB_{r\sqrt{\max v_\Omg/b_1}}(x_\star).
	\end{equation}
	Analogously, using the upper bound on $\max v_\Omg - v_\Omg(x)$ in~\eqref{eq:bndb}   
	we get the inclusion
	\begin{equation}\label{eq:inclusion2}
		\Omg_r \supset \{x\in\Omg\colon 
		b_2|x-x_\star|^2 < r^2\max v_\Omg\}
		= \cB_{r\sqrt{\max v_\Omg/b_2}}(x_\star),\qquad\text{for all}\,\,r\in(0,r_\star),
	\end{equation}
	where $r_\star\in(0,1)$ is chosen to be sufficiently small to ensure that $\cB_{r\sqrt{\max v_\Omg/b_2}}(x_\star)\subset\Omg$ for all $r\in(0,r_\star)$.
	
	Combining the monotonicity of the perimeter with respect to inclusion of convex sets~\cite[Lemma 2.2.2]{BB} with the standard formula for the perimeter of the disk we conclude from~\eqref{eq:inclusion1} and~\eqref{eq:inclusion2} that there exist constants $\wt{c}_2 > \wt{c}_1 > 0 $ such that for all $r\in(0,r_\star)$,
	\begin{equation}\label{eq:Ar_prelim}
		\wt{c}_1 r \le |\cA_r|\le \wt{c}_2 r.
	\end{equation}
	Note that again by~\cite[Lemma 2.2]{BB},
	the function $(0,1)\ni r\mapsto |\cA_r|$ is non-decreasing
	and bounded by $|\p\Omg|$ from above. Hence,
	the two-sided bound in~\eqref{eq:Ar_prelim} can be extended to all $r\in(0,1)$ with possibly different constants. Namely,
	 one can find $c_1\in (0,\wt{c}_1]$ and $c_2\in [\wt{c}_2,\infty)$ such that
	\[
	c_1 r \le |\cA_r|\le c_2 r,\qquad \text{for all}\,\, r\in (0,1),
	\]
	which is the desired inequality.
\end{proof}

\subsection{Magnetic Neumann Laplacian on a general domain}
In this subsection, we will introduce the magnetic Neumann Laplacian on a bounded, smooth, simply-connected domain
with the homogeneous magnetic field. In our analysis, we use the gauge of the magnetic filed based on the magnetic scalar potential. With this choice of the gauge, the boundary condition does not depend on the intensity of the magnetic field.

 Let us fix the intensity of the homogeneous magnetic field $b > 0$. We consider the unique {\it magnetic scalar potential} $\phi\in C^\infty(\ov\Omg)$ satisfying 
\begin{equation}\label{eq:scalar_potential}
	\begin{cases} 
		\Delta \phi = b,&\,\text{in}\,\,\Omg,\\
		\phi = 0,&\,\text{on}\,\,\p\Omg.
	\end{cases}
\end{equation}
In particular, we have the relation $\phi = -bv_\Omg$, where $v_\Omg\in C^\infty(\ov\Omg)$ is the torsion function of $\Omg$ introduced and discussed in Subsection~\ref{ssec:torsion}.

In terms of the magnetic scalar potential, the vector potential
\begin{equation}\label{eq:vector_potential}
\bA(x) := (-\p_2\phi, \p_1\phi)^\top
\end{equation}
is the unique solution in the Sobolev space $H^1(\Omg;\dC^2)$ of the boundary value problem:
\begin{equation}
\begin{cases}
	{\rm curl}\, \bA = b,&\,\text{in}\,\,\Omg,\\
	\div \bA = 0,&\,\text{in}\,\,\Omg,\\
	\nu \cdot \bA = 0,&\,\text{on}\,\,\p\Omg,
\end{cases}
\end{equation}	
see \cite[Section D.2.1]{FH} for details.
Following the lines of~\cite[\S 1.2]{FH}, we introduce the self-adjoint magnetic Neumann Laplacian $\sfH_b^\Omg$ in $L^2(\Omg)$ with the homogeneous magnetic field of intensity $b$ via the closed, non-negative, densely defined quadratic form
\begin{equation}\label{eq:form}
	\frh_b^\Omg[u]  := \int_{\Omg}|(-\ii\nb - \bA)u|^2\dd x,\qquad \dom\frh_b^\Omg := H^1(\Omg).
\end{equation}
Due to our special choice of the gauge for the vector potential of the magnetic field the operator $\sfH_b^\Omg$
is characterized by 
\begin{equation}\label{key}
\sfH_b^\Omg u = - (\nabla - \ii\bA)^2 u,\qquad \dom\sfH_b^\Omg = \big\{u\in H^2(\Omg)\colon \nu\cdot\nb u = 0\,\,\text{on}\,\,\p\Omg\big\}.	
\end{equation}
The spectrum of the non-negative operator $\sfH_b^\Omg$ is purely discrete and we denote by $\mu_1^b(\Omg) > 0$ its lowest eigenvalue. The fact that $\mu_1^b(\Omg)$ is strictly positive is verified \eg~in~\cite[Appendix C]{KL22}. According to the min-max principle~\cite[Theorem 1.28]{FLW23} we have the variational characterization
\begin{equation}\label{eq:variational}
	\mu_1^b(\Omg) = \inf_{u\in H^1(\Omg)\sm\{0\}} \frac{\displaystyle\int_{\Omg}|(-\ii\nb - \bA)u|^2\dd x}{\displaystyle\int_{\Omg}|u|^2\dd x}.
\end{equation}
As we already mentioned, since $\Omg$ is simply connected, $\mu_1^b(\Omg)$ is invariant under the change of the gauge of the vector potential for the homogeneous magnetic field of intensity $b >0$. Thus, by choosing the symmetric gauge as in the introduction we would get the same lowest eigenvalue for the magnetic Neumann Laplacian.

\subsection{Magnetic Neumann Laplacian on a disk}
The aim of this subsection is to collect properties of the
magnetic Neumann Laplacian on a disk. These properties will be extensively used in the proof of our main result.

Let $\cB = \cB_R\subset\dR^2$ be the disk of radius $R >0$
centered at the origin. It is not hard to verify that the solution of the system~\eqref{eq:scalar_potential} for the disk is given by
\begin{equation}
	\phi(x) = b\left(\frac{x^2_1+x_2^2}{4} - \frac{R^2}{4}\right),
\end{equation} 
where we used the convention $x = (x_1,x_2)$.
Thus, we find that the associated vector potential of the magnetic field reads as
\begin{equation}\label{eq:Adisk}
	\bA(x) = \frac{b}{2}(-x_2,x_1)^\top,
\end{equation}
and coincides with the symmetric gauge chosen in the introduction.

The magnetic Neumann Laplacian $\sfH_b^\cB$ on the disk $\cB$
acts in $L^2(\cB$) and is introduced via the quadratic form~\eqref{eq:form} with $\Omg = \cB$ and $\bA$ as in~\eqref{eq:Adisk}.

It is convenient to use polar coordinates $(r,\tt)$ on the disk $\cB$.
Following the lines of~\cite[Section 2]{KL22}, we introduce 
a complete family of mutually orthogonal projections
in the Hilbert space $L^2(\cB)$ defined in polar coordinates
by
\[
	(\Pi_m u)(r,\tt) := \frac{1}{2\pi}e^{\ii m\tt}\int_0^{2\pi} u(r,\tt') e^{-\ii m\tt'}\dd\tt',\qquad m\in\dZ.
\]
The space $\ran\Pi_m$ can be naturally identified with $L^2((0,R);r\der r)$ leading thus to the orthogonal decomposition
\begin{equation}\label{eq:orth}
	L^2(\cB)\simeq\bigoplus_{m\in\dZ} L^2((0,R);r\der r).
\end{equation}
Let us introduce for any $m\in\dZ$ the closed, densely defined and non-negative quadratic form in the Hilbert space $L^2((0,R);r\der r)$ by
\begin{equation}\label{eq:form_fiber}
\begin{aligned}
		\frh_{b,m}^R[f] & :=\int_0^R\bigg[|f'(r)|^2 +\frac{1}{r^2}\left(m-\frac{br^2}{2}\right)^2|f(r)|^2\bigg]r\dd r,\\
		\dom\frh_{b,m}^R & := \big\{f\colon f,f', mr^{-1}f\in L^2((0,R);r\der r)\big\}.
\end{aligned}
\end{equation}
The self-adjoint fiber operator in the Hilbert space $L^2((0,R);r\der r)$ associated with the above quadratic form
via the first representation theorem is denoted by $\sfH_{b,m}^R$, $m\in\dZ$. According to~\cite[Eq. (3.5)]{KL22} we have the orthogonal decomposition of the magnetic Neumann Laplacian $\sfH^\cB_b$, 
\begin{equation}\label{eq:orth_operator}
	\sfH^\cB_b \simeq\bigoplus_{m\in\dZ} \sfH_{b,m}^R
\end{equation}
with respect to~\eqref{eq:orth}. The spectra of all the fiber operators are purely discrete and we denote by $\mu_1^{b,m}(R) > 0$ the lowest eigenvalue of the fiber operator $\sfH_{b,m}^R$. It follows immediately from the orthogonal decomposition~\eqref{eq:orth_operator} that
\[
	\mu_1^b(\cB) = \inf_{m\in\dZ} \mu_1^{b,m}(R).
\]
In the next proposition we provide some basic properties of the lowest eigenvalue and the corresponding eigenfunction for the magnetic Neumann Laplacian on a disk with a weak homogeneous magnetic field.
\begin{prop}\label{prop:disk}
	Let $\sfH_b^\cB$ be the magnetic Neumann Laplacian on the disk $\cB=\cB_R\subset\dR^2$ of radius $R >0$ introduced via the quadratic form~\eqref{eq:form}.  Then the following hold.
\begin{myenum}
\item Under the assumption $bR^2 < 1$, it holds that 
$\mu_1^b(\cB) = \mu_1^{b,0}(R)$ and, in particular, a radial real-valued eigenfunction 
corresponds to the lowest eigenvalue of $\sfH_b^\cB$. %
\item $(0,b^{-1/2})\ni R\mapsto \mu_1^b(\cB_R)$ is a strictly increasing function.
\item There exists $b_\circ = b_\circ(R)>0$ such that the function
\[(0,b_\circ)\ni b\mapsto \frac{\mu_1^b(\cB_R)}{b^2}\]
		is strictly decreasing.
	\end{myenum}
\end{prop}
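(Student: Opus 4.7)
For part (i), the plan is to exploit the orthogonal decomposition $\sfH_b^\cB \simeq \bigoplus_{m\in\dZ}\sfH_{b,m}^R$ and show that the ground state sits in the $m=0$ fiber. For any $m \neq 0$ and $f \in \dom\frh_{b,m}^R$ a direct expansion gives
\[
\frh_{b,m}^R[f] - \frh_{b,0}^R[f] = \int_0^R \Bigl[\frac{m^2}{r^2} - mb\Bigr] |f|^2\, r\,\dd r \geq \Bigl(\frac{m^2}{R^2} - mb\Bigr) \int_0^R |f|^2\, r\,\dd r,
\]
using $r^{-2} \geq R^{-2}$ on $(0, R)$. Under the hypothesis $bR^2 < 1$ the constant $m^2/R^2 - mb$ is strictly positive in both cases $m \leq -1$ (where $-mb > 0$) and $m \geq 1$ (where $m/R^2 > b$ since $bR^2 < 1 \leq m$). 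This forces $\mu_1^{b,m}(R) > \mu_1^{b,0}(R)$ for every $m \neq 0$, so $\mu_1^b(\cB) = \mu_1^{b,0}(R)$. Standard one-dimensional Sturm-Liouville theory applied to $\sfH_{b,0}^R$ yields a simple positive ground state on $[0, R]$, which reassembles into a real-valued, radial, positive ground state of $\sfH_b^\cB$.

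For part (ii), denote $\lambda_R := \mu_1^{b,0}(R)$ and let $u_R > 0$ be the corresponding normalized radial ground state, solving $-(r u_R')'/r + (b^2 r^2/4) u_R = \lambda_R u_R$ on $(0, R)$ with $u_R'(R) = 0$. The idea is to derive a Hadamard-type domain-derivative formula: differentiating the Rayleigh quotient in $R$, using the eigenvalue equation and integration by parts, and exploiting the Neumann condition to kill the boundary term $u_R'(R)\,\partial_R u_R(R)$, one arrives at
\[
\frac{\dd\lambda_R}{\dd R} = \frac{R\, |u_R(R)|^2 \bigl[b^2 R^2/4 - \lambda_R\bigr]}{\int_0^R |u_R|^2\, r\,\dd r}.
\]
The constant trial function $f \equiv 1$ in the $m=0$ Rayleigh quotient furnishes the bound $\lambda_R \leq b^2 R^2/8 < b^2 R^2/4$, hence the bracket is strictly positive; combined with $u_R(R) > 0$, this gives $\dd\lambda_R/\dd R > 0$ on all of $(0, b^{-1/2})$.

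For part (iii), the $m = 0$ form splits as $\frh_{b,0}^R[f] = A_R(f) + b^2 C_R(f)$ with $A_R(f) := \int_0^R |f'|^2\, r\,\dd r$ and $C_R(f) := \tfrac14\int_0^R r^3|f|^2\,\dd r$ independent of $b$. By (i), for $b \in (0, R^{-2})$ the lowest eigenvalue $\mu_1^b(\cB_R)$ equals $h(b^2)$, where $h(s) := \inf_{\|f\|=1}[A_R(f) + s C_R(f)]$ is an infimum of affine functions of $s$, hence concave on $(0, R^{-4})$ with $h(0) = 0$. The elementary fact that $s \mapsto h(s)/s$ is non-increasing for such $h$ immediately gives non-increase of $b \mapsto \mu_1^b(\cB_R)/b^2$. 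The strict decrease is the main delicate point: I would rule out local affineness of $h$ by invoking Kato's perturbation theory (real-analyticity of the simple ground state eigenvalue in the analytic-family parameter $s$), which would propagate local affineness to $h(s) = cs$ on the whole interval; the envelope theorem would then give $C_R(f_s) = c$ and hence $A_R(f_s) = h(s) - sC_R(f_s) = 0$, forcing $f_s$ to be the normalized constant function -- but a constant is not a solution of the Euler-Lagrange eigenvalue equation for $s > 0$, a contradiction. Consequently $b_\circ := R^{-2}$ is a valid choice.
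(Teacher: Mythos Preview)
Your argument is correct in all three parts, but parts~(ii) and~(iii) proceed quite differently from the paper.

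For~(i), the paper simply cites~\cite[Proposition~3.3]{KL22}; your direct form comparison is essentially the content of that reference, so nothing is lost.

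For~(ii), the paper uses Neumann bracketing: it splits $(0,R_2)$ at an intermediate radius $R_1\in[R_2/\sqrt{2},R_2)$, observes that the resulting decoupled operator has the fiber $\sfH_{b,0}^{R_1}$ as a direct summand, bounds the annular piece from below by $b^2R_2^2/8$ via the potential term, and combines this with the trial bound $\mu_1^b(\cB_{R_2})\le b^2R_2^2/8$ to force the minimum into the inner summand. Strictness is then extracted from ODE uniqueness. Your Hadamard-type computation is more direct and yields the derivative formula explicitly; it does, however, presuppose smooth (in fact analytic) dependence of the simple ground state on $R$, which you should justify by rescaling to a fixed interval so that the family becomes analytic of type~(B). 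The paper's bracketing avoids any differentiability of $R\mapsto u_R$ altogether.

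For~(iii), the paper expands $\mu_1^b(\cB_R)$ as a convergent power series $\sum_{j\ge 2}c_jb^j$ with $c_2=R^2/8$, infers from the strict inequality $\mu_1^b(\cB_R)<b^2R^2/8$ that the first nonvanishing higher coefficient is negative, and concludes strict decrease of $\mu_1^b/b^2$ only on a sufficiently small interval $(0,b_\circ)$. Your concavity-plus-analyticity argument is cleaner and in fact stronger: since $h(s)$ is real-analytic for all $s>0$ (the potential $sr^2/4$ being a bounded symmetric perturbation with simple ground state), local affineness propagates globally, and your contradiction rules it out everywhere. Hence you obtain the explicit choice $b_\circ=R^{-2}$, whereas the paper's series argument gives no quantitative control on $b_\circ$.
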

\begin{proof}
(i) The statement of this item is proved in~\cite[Proposition 3.3]{KL22}. 

\smallskip

\noindent (ii) 
According to item (i), under the assumption $bR^2 < 1$ the lowest eigenvalue of the magnetic Laplacian on $\cB$ corresponds to the fiber labelled by $m = 0$. Hence, we get using the min-max principle that
\begin{equation}\label{eq:variational_radial}
	\mu_1^b(\cB_R) = \inf_{f\in \dom\frh_{b,0}^R\sm\{0\}}
	\frac{\displaystyle\int_0^R\Big(|f'(r)|^2 +\frac{b^2r^2}{4}|f(r)|^2\Big)r\dd r}{\displaystyle\int_0^R|f(r)|^2 r\dd r},\qquad (bR^2 < 1).
\end{equation}
Using the characteristic function of the interval $[0,R]$ as a trial function we get the following upper bound on the lowest eigenvalue
\begin{equation}\label{eq:upper_bound_disk}
	\mu_1^b(\cB_R) \le \frac{\displaystyle\int_0^R \frac{b^2r^3}{4}\dd r}{\displaystyle\int_0^R r\dd r} = \frac{b^2R^2}{8}.
\end{equation}
Let us choose arbitrary $R_2 \in (0,b^{-1/2})$ and $R_1 \in [\frac{R_2}{\sqrt{2}},R_2)$.
We introduce an auxiliary closed, densely defined, non-negative quadratic form in the Hilbert space $L^2((0,R_2);r\dd r)$
\[
	\begin{aligned}
		\frt_{b}^{R_1,R_2}[f] & :=
		 \int_0^{R_1}\bigg[
		|f'_1(r)|^2 +\frac{b^2r^2}{4}|f_1(r)|^2\bigg]r\dd r+\int_{R_1}^{R_2}\bigg[
		|f'_2(r)|^2 +\frac{b^2r^2}{4}|f_2(r)|^2\bigg]r\dd r,\\
		\dom\frt_b^{R_1,R_2} &:= \Big\{f=f_1\oplus f_2\colon
		f_1,f_1'\in L^2((0,R_1);r\der r), f_2,f_2'\in L^2((R_1,R_2);r\der r)\Big\}.
	\end{aligned}
\]
It is straightforward to check that $\frt^{R_1,R_2}_b\prec \frh_{b,0}^{R_2}$ in the sense of ordering of quadratic forms thanks to the inclusion $\dom\frh_{b,0}^{R_2}\subset\dom\frt^{R_1,R_2}_b$ and the identity $\frh_{b,0}^{R_2}[f] = \frt^{R_1,R_2}_b[f]$ valid for any $f\in\dom\frh_{b,0}^{R_2}$. Let $\sfT^{R_1,R_2}_b$ be the self-adjoint operator associated with the quadratic form $\frt_b^{R_1,R_2}$ via the first representation theorem. By the min-max principle we infer that the lowest eigenvalue $\mu_1(\sfT^{R_1,R_2}_b)$ of $\sfT^{R_1,R_2}_b$ satisfies
\begin{equation}\label{eq:bound_NB}
	\mu_1(\sfT^{R_1,R_2}_b)\le \mu_1^b(\cB_{R_2}).
\end{equation}
The operator $\sfT^{R_1,R_2}_b$ can be decomposed into the orthogonal sum
\begin{equation}\label{eq:orthogonal}
	\sfT^{R_1,R_2}_b = \sfH_{b,0}^{R_1}\oplus \wh\sfT^{R_1,R_2}_b,
\end{equation}
where $\sfH_{b,0}^{R_1}$ is the self-adjoint fiber operator in $L^2((0,R_1);r\der r)$ associated with the quadratic form~\eqref{eq:form_fiber} with $R = R_1$ and $m=0$ while
the self-adjoint operator $\wh\sfT^{R_1,R_2}_b$ in $L^2((R_1,R_2);r\der r)$ is associated with the quadratic form
\[
	\{f\colon f,f'\in L^2((R_1,R_2);r\der r)\}\ni f\mapsto \int_{R_1}^{R_2}\bigg[|f'(r)|^2+\frac{b^2r^2}{4}|f(r)|^2\bigg] r\dd r.
\]
Using that $R_1 \ge \frac{R_2}{\sqrt{2}}$ we end up with the bound
\begin{equation}\label{eq:lower_bound_annulus}
	\mu_1(\wh\sfT^{R_1,R_2}_b) > \frac{b^2R_2^2}{8}, 
\end{equation}
where $\mu_1(\wh\sfT^{R_1,R_2}_b)$ denotes the lowest eigenvalue of $\wh\sfT^{R_1,R_2}_b$. 

As a by-product of the above analysis we also get that the case of equality in~\eqref{eq:bound_NB} can not occur. Indeed, if the equality $\mu_1(\sfT^{R_1,R_2}_b) = \mu_1^b(\cB_{R_2})$ would hold, then by~\cite[Chapter~10, Theorem 1]{BS87} an eigenfunction $h\in\dom\frh_{b,0}^{R_2}$ of $\sfH_{b,0}^{R_2}$ corresponding to its lowest eigenvalue would simultaneously be the ground state of $\sfT^{R_1,R_2}_b$. In view of  \eqref{eq:upper_bound_disk} and~\eqref{eq:lower_bound_annulus} we would conclude that $h|_{[R_1,R_2]} \equiv 0$. Taking into account that by~\cite[Remark 3.1]{KL22}
the function
$h$ satisfies the ordinary differential equation
\[
-h''(r) - \frac{h'(r)}{r} +\frac{b^2r^2}{4}h(r) = \mu_1^b(\cB_{R_2})h(r),\qquad \text{for}\,\, r\in(0,R_2),
\]
we would get by the uniqueness of the solution of the 
initial value problem for the above ordinary differential equation that $h\equiv 0$, leading to a contradiction. 

Combining the upper bound~\eqref{eq:upper_bound_disk} with the lower bound~\eqref{eq:lower_bound_annulus} and the eigenvalue inequality \eqref{eq:bound_NB} we get taking into account the orthogonal decomposition~\eqref{eq:orthogonal} that
\begin{equation}\label{eq:bnd}
	\mu_1^b(\cB_{R_1}) = \mu_1^{b,0}(R_1) = \mu_1(\sfH_{b,0}^{R_1}) < \mu_1^b(\cB_{R_2}).
\end{equation}
The latter bound implies that $(0,b^{-1/2})\ni R\mapsto \mu_1^b(\cB_R)$ is a strictly increasing function.

\smallskip

\noindent (iii)
The definition of the operator $\sfH_b^\cB$
	and of its lowest eigenvalue $\mu_1^b(\cB)$ can be naturally extended to negative $b$. In particular, the relation $\mu_1^{-b}(\cB) = \mu_1^b(\cB)$ holds for any $b > 0$.
Note that by~\cite[Section 1.5]{FH} the function $\dR\ni b\mapsto\mu_1^b(\cB)$ is real analytic in a neighbourhood of the origin.  Moreover, there exists  $b_\star = b_\star(R) > 0$ and  the real coefficients $\{c_j\}_{j=2}^\infty$ (which depend on $R$) so that
\begin{equation}\label{eq:expansionbeta}
	\mu_1^b(\cB_R) = \sum_{j=2}^\infty c_j b^j,\qquad \text{for all}\,\, b\in (0,b_\star),
\end{equation}
where $c_2 = \frac{R^2}{8}$. It follows from the min-max principle with the characteristic function of $\cB_R$ as the trial function that for all $b > 0$
\[
\mu_1^b(\cB_R) < \frac{b^2 R^2}{8}.
\]
The inequality is strict because the characteristic function of $\cB_R$ is not the ground state of $\sfH_b^\cB$ for all $b > 0$.
Hence,  we infer that the first non-zero coefficient $c_j$
with index $j > 2$ in the expansion~\eqref{eq:expansionbeta} is negative.   Let $j_0 > 2$ be the index of this coefficient.  Thus, there exists
a sufficiently small $b_\circ = b_\circ(R) \in (0,b_\star)$ such that the function
\begin{equation}\label{eq:ratio}
	(0,b_\circ)\ni b\mapsto u(b):= \frac{\mu_1^b(\cB_R)}{b^2}
\end{equation}
is strictly decreasing.   Indeed, the  function defined by $v(b):=\sum_{j\geq j_0}c_{j}b^{j-j_0+1}$ 
is real analytic in a neighbourhood of the origin and satisfies $v'(0)=c_{j_0}<0$,  so there is $b_\circ>0$ such that $v$ is negative-valued and decreasing on $(0,b_\circ)$,  and for $0<b_1<b_2<b_\circ$,  we have 
\[ \frac{u(b_1)-c_2}{u(b_2)-c_2}=\left(\frac{b_1}{b_2}\right)^{j_0-3}\frac{v(b_1)}{v(b_2)}<1,\]
from which the inequality $u(b_2) < u(b_1)$ follows in view of $u(b_2) -c_2 < 0$.
\end{proof}
\begin{remark}
	For $b = 1$, we get from Proposition~\ref{prop:disk} that
	a radial eigenfunction corresponds to the lowest eigenvalue of $\sfH^\cB_b$ for all $R\in(0,1)$ and that $R\mapsto \mu_1^b(\cB_R)$ is strictly increasing on the interval $(0,1)$.
	The numerical computations in~\cite[Figure 3]{CLPS23} show that both properties hold up to $R\approx 2$.
\end{remark}

\section{A bound on the lowest magnetic Neumann eigenvalue  in terms of the torsion function}
\label{sec:bound}
In this section we obtain an upper bound on the lowest magnetic Neumann eigenvalue on a bounded, convex domain with moderate intensity of the magnetic field. The bound is given in terms of the torsion function and the lowest 
magnetic Neumann eigenvalue of the disk with the same
maximal value of the torsion function as the domain.  The employed trial function is constant on the level curves of the torsion function.

Throughout this section we will rely on a variant of the co-area formula.
Recall that the co-area formula applied in two dimensions (see~\cite[\S 2.12]{AFP} and also~\cite[Theorem 3.1]{F59}) to an open set $\cU\subset\dR^2$, a Lipschitz continuous real-valued function $f\colon\cU\arr\dR$, and a measurable function $g\colon\cU\arr[0,\infty)$ gives
\begin{equation}\label{eq:coarea}
	\int_\cU g(x)|\nabla f(x)|\dd x =
	\int_\dR\int_{f^{-1}(t)} g(x)\dd \cH^1(x)\dd t,
\end{equation}
where $\cH^1$ in the inner integral on the right-hand side is the one-dimensional Hausdorff  measure on the level set $f^{-1}(t):=\{x\in\cU\colon f(x) = t\}$. We remark that by 
a consequence of the Sard theorem (see~\cite[Theorem 2.5\,(iv)]{ABC13}) for almost all $t\in\dR$
the connected components of $f^{-1}(t)$ are either points or simple closed Lipschitz curves. 

In the following, $\Omg\subset\dR^2$ is a bounded, convex, $C^\infty$-smooth domain and $v_\Omg\in C^\infty(\ov\Omg)$ is the torsion function of $\Omg$ introduced as in Subsection~\ref{ssec:torsion}. 
Two characteristics of the domain $\Omg$ play an important role in our analysis
\begin{equation}\label{eq:AB}
	F(\Omg) := \esinf_{r\in(0,1)}\int_{\cA_r} \frac{1}{|\nb v_\Omg(x)|}\dd\cH^1(x),\qquad
	G(\Omg) := \essup_{r\in(0,1)} \int_{\cA_r}\frac{|\nb v_\Omg(x)|}{r^2}\dd\cH^1(x),
\end{equation}
where the level set $\cA_r$ is defined in~\eqref{eq:Ar}.
\begin{lem}\label{lem:FG}
	Let $\Omg\subset\dR^2$ be a bounded, convex, $C^\infty$-smooth domain. Then the quantities $F(\Omg)$ and $G(\Omg)$ introduced in~\eqref{eq:AB} are finite and non-zero.
\end{lem}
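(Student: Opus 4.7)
The starting point is the perimeter estimate $c_1 r\le |\cA_r|\le c_2 r$ of Proposition~\ref{prop:Ar}. Setting $M:=\max v_\Omg$ and
\[
\psi_F(r):=\int_{\cA_r}\frac{\dd\cH^1}{|\nb v_\Omg|},\qquad \psi_G(r):=\frac{1}{r^2}\int_{\cA_r}|\nb v_\Omg|\,\dd\cH^1,
\]
the four claims reduce to bounding $\psi_F$ from below uniformly and from above on a set of positive measure, and symmetrically for $\psi_G$. All bounds will be obtained by combining the perimeter estimate with pointwise control of $|\nb v_\Omg|$ on each level curve $\cA_r$.

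Two of the four halves come essentially for free from Proposition~\ref{prop:convex}(iii), which states $|\nb v_\Omg(x)|^2\le 2(M-v_\Omg(x))=2Mr^2$ for every $x\in\cA_r$. Inserting this upper bound into each integrand and applying the perimeter estimate at once yields, for every $r\in(0,1)$,
\[
\psi_F(r)\ge \frac{c_1}{\sqrt{2M}},\qquad \psi_G(r)\le c_2\sqrt{2M},
\]
which proves $F(\Omg)>0$ and $G(\Omg)<\infty$.

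The remaining halves, $F(\Omg)<\infty$ and $G(\Omg)>0$, rely on the matching local lower bound
\[
|\nb v_\Omg(x)|\ge c\,r\qquad\text{for some }r_\star\in(0,1),\ c>0,\ \text{and all }r\in(0,r_\star),\ x\in\cA_r.
\]
Granting it, the perimeter estimate then gives $\psi_F(r)\le c_2/c$ and $\psi_G(r)\ge cc_1$ on the positive-measure set $(0,r_\star)$, which is precisely what the essential infimum and supremum demand. To produce this bound one would Taylor-expand $v_\Omg$ about $x_\star$: since by Proposition~\ref{prop:convex}(ii) the Hessian $\cH_\star$ is negative definite, there exist $\lambda,\Lambda>0$ such that $|\cH_\star h|\ge \lambda|h|$ and $h^\top(-\cH_\star)h\le 2\Lambda|h|^2$ for every $h\in\dR^2$, whence for all sufficiently small $|h|$
\[
|\nb v_\Omg(x_\star+h)|\ge \tfrac{\lambda}{2}|h|,\qquad M-v_\Omg(x_\star+h)\le \Lambda|h|^2.
\]
On $\cA_r$ the second inequality forces $|h|\ge r\sqrt{M/\Lambda}$; combined with Proposition~\ref{prop:convex}(iv), it also ensures that $\cA_r$ is contained in an arbitrarily small neighbourhood of $x_\star$ once $r$ is small enough, so the first inequality applies and yields $|\nb v_\Omg|\ge \tfrac{\lambda}{2}\sqrt{M/\Lambda}\,r$ on the whole of $\cA_r$. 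The main obstacle is exactly this step: one must quantitatively convert the algebraic non-degeneracy $\cH_\star<0$ into a linear-in-$r$ lower bound on $|\nb v_\Omg|$ and keep the higher-order Taylor errors subdominant uniformly along every level curve $\cA_r$ with $r<r_\star$.
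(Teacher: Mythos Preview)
Your treatment of the two substantive bounds, $F(\Omega)>0$ and $G(\Omega)<\infty$, is exactly the paper's proof: insert the pointwise estimate $|\nabla v_\Omega|\le\sqrt{2M}\,r$ from Proposition~\ref{prop:convex}(iii) into each integrand and combine with the perimeter estimate of Proposition~\ref{prop:Ar}.

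Where you diverge is in the other two halves. The paper dismisses $F(\Omega)<\infty$ and $G(\Omega)>0$ in one line (``It suffices to show\ldots''), and for good reason: an essential infimum is finite as soon as the function is bounded above on a set of positive measure, and an essential supremum is positive as soon as the function is bounded below by a positive constant on such a set. On any fixed compact interval $[r_1,r_2]\subset(0,1)$ the level curves $\cA_r$ lie in the compact annulus $\overline{\Omega_{r_2}}\setminus\Omega_{r_1}$, on which $|\nabla v_\Omega|$ is continuous and nonvanishing (Proposition~\ref{prop:convex}(i)); together with the perimeter bounds this makes $\psi_F$ bounded above and $\psi_G$ bounded below on $[r_1,r_2]$ without any asymptotic analysis as $r\to0^+$. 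Your Taylor-expansion route is therefore correct in spirit but unnecessary.

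One slip in that extra argument: you invoke Proposition~\ref{prop:convex}(iv) to conclude that $\cA_r$ is contained in an arbitrarily small neighbourhood of $x_\star$ for small $r$, but that item gives $M-v_\Omega(x)\le\tfrac12|x-x_\star|^2$, which on $\cA_r$ yields only a \emph{lower} bound $|x-x_\star|\ge r\sqrt{2M}$. To shrink $\cA_r$ towards $x_\star$ you need the opposite inequality $b_1|x-x_\star|^2\le M-v_\Omega(x)$, which is the bound~\eqref{eq:bndb} established in the proof of Proposition~\ref{prop:Ar} (or, qualitatively, the uniqueness of the maximum together with continuity of $v_\Omega$). With that correction your argument goes through, but the paper's shortcut avoids the issue entirely.
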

\begin{proof}
	It suffices to show that $F(\Omg) > 0$ and that $G(\Omg) < \infty$. Using the estimate on the gradient of the torsion function in Proposition~\ref{prop:convex}\,(iii)
	and the lower bound on $|\cA_r|$ in Proposition~\ref{prop:Ar} we get
	\[
		F(\Omg) \ge \frac{1}{\sqrt{2\max v_\Omg}}
			\essinf_{r\in(0,1)}\frac{|\cA_r|}{r} \ge \frac{c_1}{\sqrt{2\max v_\Omg}} > 0.  
	\]
	Analogously, we get combining the bound in Proposition~\ref{prop:convex}\,(iii) now with the upper bound on $|\cA_r|$ in Proposition~\ref{prop:Ar} that
	\[
		G(\Omg) \le \sqrt{2\max v_\Omg}\, \esssup_{r\in(0,1)}\frac{|\cA_r|}{r}\le 
		c_2\sqrt{2\max v_\Omg} < \infty. \qedhere
	\]
\end{proof}
Now we are ready to formulate and prove the main result of this section and of the whole paper.
\begin{thm}\label{thm:main}
	Let $\Omg\subset\dR^2$ be a bounded, convex, $C^\infty$-smooth domain. Let the torsion function $v_\Omg\in C^\infty(\ov\Omg)$ of $\Omg$ be introduced as
	the solution of~\eqref{eq:torsion} and the associated quantities $F(\Omg)$ and $G(\Omg)$ be as in~\eqref{eq:AB}. Under the assumption $b|\Omg| < \pi$, the lowest eigenvalue $\mu_1^b(\Omg)$ of the magnetic Neumann Laplacian on $\Omg$ can be estimated as
	\[
		\mu_1^b(\Omg) \le \frac{1}{\max v_\Omg}\frac{G(\Omg)}{F(\Omg)}\mu_1^{b}(\cB_{2\sqrt{\max v_\Omg}}).
	\]
\end{thm}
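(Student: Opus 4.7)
My plan is to use the min-max characterization \eqref{eq:variational} with a carefully chosen trial function built from the torsion function and the ground state on the disk of radius $\rho := 2\sqrt{\max v_\Omg}$. First I would verify that the assumption $b|\Omega| < \pi$ is strong enough to invoke Proposition~\ref{prop:disk}\,(i) on $\cB_\rho$: by the Saint-Venant inequality~\eqref{eq:max_torsion}, $\rho^2 = 4\max v_\Omg \le R^2 = |\Omg|/\pi$, whence $b\rho^2 \le b|\Omg|/\pi < 1$. Thus the ground state of $\sfH_b^{\cB_\rho}$ is a radial real-valued function $f(s)$, $s\in[0,\rho]$, and $\mu_1^b(\cB_\rho)$ equals the radial Rayleigh quotient appearing on the right of \eqref{eq:variational_radial} with $R = \rho$.

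Using the gauge of Section~\ref{sec:prelim} for which $\bA = b(\p_2 v_\Omg,-\p_1 v_\Omg)^\top$ and hence $|\bA|^2 = b^2|\nabla v_\Omg|^2$, I take the real-valued trial function
\[
	u(x) := f\bigl(\rho\, r(x)\bigr),\qquad r(x) := \sqrt{1 - v_\Omg(x)/\max v_\Omg}.
\]
Because $\sqrt{v_\Omg}$ is strictly concave and $f\in C^\infty([0,\rho])$ with $f'(0)=0$, one checks that $u\in H^1(\Omg)\cap C(\overline{\Omg})$ (the potential loss of smoothness at $x_\star$ is absorbed by the fact that $f$ is an even smooth function of $s$). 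For real $u$ one has $|(-\ii\nabla-\bA)u|^2 = |\nabla u|^2 + |\bA|^2 u^2$. A direct computation using $\nabla r = -\nabla v_\Omg/(2r\max v_\Omg)$ and $\rho^2 = 4\max v_\Omg$ yields
\[
	|(-\ii\nabla-\bA)u|^2 = |\nabla v_\Omg|^2 \left[\frac{f'(\rho r)^2}{r^2\max v_\Omg} + b^2 f(\rho r)^2\right].
\]

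Next I would apply the co-area formula~\eqref{eq:coarea} with $f=v_\Omg$, followed by the change of variable $t = \max v_\Omg(1-r^2)$ so that the integration over $\{v_\Omg = t\}$ becomes integration over the level set $\cA_r$ defined in~\eqref{eq:Ar}. This gives
\[
\begin{aligned}
	\int_\Omg |(-\ii\nabla-\bA)u|^2\dd x &= \int_0^1 \left(\int_{\cA_r}|\nabla v_\Omg|\,\dd\cH^1\right)\!\left[\frac{f'(\rho r)^2}{r^2\max v_\Omg}+b^2 f(\rho r)^2\right]\!2r\max v_\Omg\, \dd r,\\
	\int_\Omg |u|^2\dd x &= \int_0^1 \left(\int_{\cA_r}\frac{1}{|\nabla v_\Omg|}\dd\cH^1\right) f(\rho r)^2 \cdot 2r\max v_\Omg\, \dd r.
\end{aligned}
\]
By the very definitions of $F(\Omg)$ and $G(\Omg)$ in~\eqref{eq:AB}, the first inner integral is bounded by $r^2 G(\Omg)$ and the second is bounded below by $F(\Omg)$ for a.e.\ $r\in(0,1)$. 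Substituting these bounds and then changing variables $s=\rho r$ collapses both integrals into the radial form on $\cB_\rho$; a bookkeeping calculation with $\rho^2=4\max v_\Omg$ then produces
\[
	\frac{\displaystyle\int_\Omg |(-\ii\nabla-\bA)u|^2\dd x}{\displaystyle\int_\Omg |u|^2\dd x} \le \frac{G(\Omg)}{F(\Omg)\max v_\Omg}\cdot\frac{\displaystyle\int_0^\rho\!\Big[f'(s)^2+\tfrac{b^2 s^2}{4} f(s)^2\Big]s\,\dd s}{\displaystyle\int_0^\rho f(s)^2 s\,\dd s}.
\]
The second factor equals $\mu_1^b(\cB_\rho)$ because $f$ was chosen as the ground state; combining with \eqref{eq:variational} yields the claimed bound.

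The main obstacles are bookkeeping rather than conceptual: (a) checking that $u$ lies in $H^1(\Omg)$ at the singular point $x_\star$ of the map $r(\cdot)$, which relies on Proposition~\ref{prop:convex}\,(ii) to ensure $r(x)\sim|x-x_\star|$ and on the fact that $f$ is even and smooth in $s$; (b) justifying the co-area formula on the Lipschitz level sets of $v_\Omg$, for which the Sard-type statement quoted after~\eqref{eq:coarea} suffices; and (c) ensuring the chosen gauge is compatible (our computation proceeds with a real trial function, which is legitimate because gauge invariance yields the same lowest eigenvalue and the chosen gauge satisfies $\nu\cdot\bA=0$). The numerical identification $b\rho^2<1$ under $b|\Omg|<\pi$ is the only place where the hypothesis on $b$ enters.
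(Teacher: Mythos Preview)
Your proposal is correct and follows essentially the same route as the paper: both use a real-valued trial function of the form $f\circ\psi$ with $\psi(x)=\sqrt{1-v_\Omg(x)/\max v_\Omg}$, apply the co-area formula on the level sets $\cA_r$, and invoke the definitions of $F(\Omg)$ and $G(\Omg)$ to reduce to the radial Rayleigh quotient on a disk of radius $\rho=2\sqrt{\max v_\Omg}$. The only noteworthy difference is that the paper keeps $f\in C^\infty([0,1];\dR)$ generic, takes the infimum, and then uses that $C^\infty([0,1])$ is a core for $\frh^1_{b,0}$ (together with the scaling identity $\mu_1^{b'}(\cB_1)=t^2\mu_1^{b'/t^2}(\cB_t)$), whereas you plug in the radial ground state directly and change variables $s=\rho r$; this spares you the density/scaling step but shifts the regularity burden to establishing that the ground state is an even smooth function of $s$, which is true but not proved in your sketch---the paper's variant sidesteps this by showing $\psi$ is globally Lipschitz via Proposition~\ref{prop:convex}\,(iii), so that $f\circ\psi\in H^1(\Omg)$ for any smooth $f$.
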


\begin{remark}
The constant $C(\Omg)$ and the radius $\rho(\Omg)$ in~\eqref{eq:equivalent_bound} are thus expressed as
\[
	C(\Omg) = \frac{1}{\max v_\Omg}\frac{G(\Omg)}{F(\Omg)}\qquad\text{\and}\qquad\rho(\Omg) = 2\sqrt{\max v_\Omg},
\]
and the inequality in Theorem~\ref{thm:main} can be written as
\[
	\mu_1^b(\Omg)\le C(\Omg)\mu_1^b(\cB_{\rho(\Omg)}).
\]
We remark that the maximum of the torsion function for the disk $\cB_{\rho(\Omg)}$ is equal to $\max v_\Omg$.
It follows from~\eqref{eq:max_torsion}
that $\rho(\Omg)\le R$ where $R$ is the radius of the disk
$\cB\subset\dR^2$ with the same area as $\Omg$. Using the monotonicity property in Proposition~\ref{prop:disk}\,(ii)
we arrive at the following consequence
\[
	\mu_1^b(\Omg) \le C(\Omg)\mu_1^b(\cB).
\]
In Section~\ref{sec:geom-cst} we will show that $C(\Omg) \ge 1$.
\end{remark}
\begin{proof}[Proof of Theorem~\ref{thm:main}]
	Let us consider an auxiliary function $\psi\colon\Omg\arr[0,1]$ defined by
	\begin{equation}\label{eq:psi}
	\psi(x) := \sqrt{1-\frac{v_\Omg(x)}{\max v_\Omg}}.
	\end{equation} 
	The function $\psi$ attains the unit value on the boundary of $\Omg$ and vanishes at the point, where the torsion function $v_\Omg$ is maximal.
	Clearly, the function $\psi$ is continuous up to the boundary of $\Omg$ thanks to continuity of the torsion function. Moreover, for the gradient of $\psi$ we get
	 the following expression
	\begin{equation}\label{eq:nabla_psi}
		\nb\psi(x) = -\frac{\nb v_\Omg(x)}{2\max v_\Omg}\left(1-\frac{v_\Omg(x)}{\max v_\Omg}\right)^{-1/2} = -\frac{\nb v_\Omg(x)}{2(\max v_\Omg)\psi(x)}.
	\end{equation}
	Using the bound in Proposition~\ref{prop:convex}\,(iii)
	we get
	\[
		|\nb \psi(x)| = \frac{|\nb v_\Omg(x)|}{2\sqrt{\max v_\Omg}\sqrt{\max v_\Omg - v_\Omg(x)}} \le \frac{1}{\sqrt{2\max v_\Omg}},\qquad \text{for all}\,\, x\in\Omg\sm\{x_\star\}.
	\]
	In particular, we conclude that the function $\psi$ is Lipschitz.
	As a trial function for the magnetic Neumann Laplacian on $\Omg$ we select
	\[
		u_f(x) := f\big(\psi(x)\big),\qquad x\in\Omg,
	\]
	where $f\in C^\infty([0,1])$ is a generic and real-valued function.
	Thanks to regularity of $f$ and Lipschitz continuity of $\psi$ we obtain that $u_f\in H^1(\Omg)$. We remark that the function $u_f$ is also real-valued. 
	By the variational characterization~\eqref{eq:variational} we get
	\begin{equation}\label{eq:bnd2}
		\mu^b_1(\Omg) \le \inf_{\begin{smallmatrix}f\in C^\infty([0,1];\dR)\\ f\ne0\end{smallmatrix}}\frac{\displaystyle
			\int_\Omg|\nb u_f(x)|^2\dd x + b^2\int_\Omg|\nb v_\Omg(x)|^2|u_f(x)|^2\dd x}{\displaystyle\int_\Omg|u_f(x)|^2\dd x},
	\end{equation}
	where we used that $u_f$ is real-valued and applied the formula~\eqref{eq:vector_potential} for the vector potential of the magnetic field.
	Next, we estimate the three integrals entering the upper bound~\eqref{eq:bnd2}.
	For the integral in the denominator we obtain combining the co-area formula and the identity~\eqref{eq:nabla_psi} 
	\begin{equation}\label{eq:integral1}
		\begin{aligned}
			\int_\Omg|u_f(x)|^2\dd x & =
			\int_\Omg\frac{|f(\psi(x))|^2}{|\nb\psi(x)|}|\nb\psi(x)|\dd x \\
			&=\int_0^1|f(r)|^2\int_{\cA_r}\frac{1}{|\nb \psi(x)|}\dd\cH^1(x)\dd r \\
			&=
			2\max v_\Omg
			\int_0^1|f(r)|^2r\int_{\cA_r}\frac{1}{|\nb v_\Omg(x)|}\dd\cH^1(x)\dd r\\
			&\ge 
			2F(\Omg)\max v_\Omg\int_0^1  |f(r)|^2r\dd r,
		\end{aligned}
	\end{equation}
	where we used in between that $\cA_r = \{x\in\Omg\colon\psi(x) =r\}$.
	For the first integral in the numerator in~\eqref{eq:bnd2} we get again applying the co-area formula
	\begin{equation}\label{eq:integral2}
	\begin{aligned}
		\int_\Omg|\nb u_f(x)|^2\dd x &=\int_\Omg |f'(\psi(x))|^2|\nb\psi(x)|^2\dd x \\
		&= \int_0^1 |f'(r)|^2 \int_{\cA_r}|\nb \psi(x)|\dd\cH^1(x)\dd r\\
		&= \frac{1}{2\max v_\Omg} \int_0^1|f'(r)|^2r \int_{\cA_r}\frac{|\nb v_\Omg(x)|}{r^2}\dd\cH^1(x)\dd r\\
		& \le \frac{G(\Omg)}{2\max v_\Omg}\int_0^1|f'(r)|^2 r\dd r.
	\end{aligned}	
	\end{equation}
	Finally, for the second integral in the numerator in the right hand side of~\eqref{eq:bnd2} we get
	\begin{equation}\label{eq:integral3}
		\begin{aligned}
			\int_\Omg|\nb v_\Omg(x)|^2|u_f(x)|^2\dd x & =
			4(\max v_\Omg)^2\int_\Omg |f(\psi(x))|^2|\nb \psi(x)|^2|\psi(x)|^2\dd x\\
			&= 2\max v_\Omg \int_0^1 |f(r)|^2 r^3\int_{\cA_r}\frac{|\nb v_\Omg(x)|}{r^2}\dd\cH^1(x)\dd r\\
			&\le 2G(\Omg)\max v_\Omg \int_0^1|f(r)|^2 r^3\dd r.  
		\end{aligned}
	\end{equation}
	Plugging the estimates~\eqref{eq:integral1},~\eqref{eq:integral2}, and~\eqref{eq:integral3} into the upper bound~\eqref{eq:bnd2} we arrive at
	\begin{equation*}\label{key}
		\mu_1^b(\Omg) \le \frac{1}{4(\max v_\Omg)^2}\frac{G(\Omg)}{F(\Omg)}
		\inf_{\begin{smallmatrix}
				f\in C^\infty([0,1];\dR)\\ f\ne 0\end{smallmatrix}} \frac{\displaystyle\int_0^1|f'(r)|^2r\dd r +4(\max v_\Omg)^2 b^2\int_0^1 |f(r)|^2r^3\dd r}{\displaystyle\int_0^1 |f(r)|^2r\dd r}.
	\end{equation*}
	Thanks to the condition $b|\Omg| < \pi$ and the inequality~\eqref{eq:max_torsion} we get that
	$4(\max v_\Omg)b < 1$. Hence, using that $C^\infty([0,1])$ is a core for the quadratic form $\frh_{b,0}^1$ defined in~\eqref{eq:form_fiber}, we obtain by~\eqref{eq:variational_radial} that
	\begin{equation}\label{eq:almost}
		\mu_1^b(\Omg) \le \frac{1}{4(\max v_\Omg)^2}\frac{G(\Omg)}{F(\Omg)}\mu_1^{4b\max v_\Omg}(\cB_1).
	\end{equation}
	Using the scaling property,
	$\mu_1^{b'}(\cB_1) = t^2\mu_1^{b'/t^2}(\cB_t)$ for all $t,b' > 0$ (see~\cite[page 1]{CLPS23}),
	we get
	\[
	\frac{1}{4\max v_\Omg}\mu_1^{4b\max v_\Omg}(\cB_1) = 
	\mu_1^b(\cB_{2\sqrt{\max v_\Omg}}).
	\]
	Hence, we derive from~\eqref{eq:almost} that
	\[
		\mu_1^b(\Omg) \le \frac{1}{\max v_\Omg}\frac{G(\Omg)}{F(\Omg)}\mu_1^b(\cB_{2\sqrt{\max v_\Omg}}),
	\]	
	which is the desired inequality.
\end{proof}

\section{Estimates for the geometric constants}\label{sec:geom-cst}
In this section,  we study the geometric constants \(F(\Omega)\) and \(G(\Omega)\) appearing in Theorem~\ref{thm:main}.   We will give a geometric lower bound on \(G(\Omega)\), a geometric upper bound on $F(\Omg)$ and a universal lower bound on \(F(\Omega)\).  

In the next lemma we obtain alternative formulas for
$F(\Omg)$ and $G(\Omg)$.
\begin{lem}\label{lem:FG2}
	Let $\Omg\subset\dR^2$ be a bounded, convex, $C^\infty$-smooth domain with the torsion function $v_\Omg\in C^\infty(\ov\Omg)$.
	Let $\Omg_r$, $r\in(0,1)$, be superlevel sets of $v_\Omg$ defined as in~\eqref{eq:Omgr}.
	Then the function $r\mapsto |\Omg_r|$ and the quantities $F(\Omg)$ and $G(\Omg)$ introduced in~\eqref{eq:AB} satisfy:
	\begin{myenum}
		\item $(0,1)\ni r\mapsto |\Omg_r|$ is $C^\infty$-smooth;
		\item $G(\Omg) = \sup_{r\in (0,1)}\frac{|\Omg_r|}{r^2}$;
		\item $F(\Omg) = \frac{1}{2\max v_\Omg}\inf_{r\in(0,1)}
		\frac{\frac{\dd}{\dd r}|\Omg_r|}{ r}$.
	\end{myenum}
\end{lem}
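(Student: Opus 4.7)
The plan is to derive a single master formula expressing $|\Omg_r|$ as an integral, from which both the smoothness assertion~(i) and the formula for $F(\Omg)$ in~(iii) fall out, and then to use the divergence theorem for~(ii). Setting $M := \max v_\Omg$ for brevity, I would apply the co-area identity~\eqref{eq:coarea} to the Lipschitz function $\psi$ from~\eqref{eq:psi}, with the measurable weight $g(x) := \mathbf{1}_{\Omg_r}(x)/|\nb\psi(x)|$, defined arbitrarily at the isolated point $x_\star$ where $\nb\psi$ vanishes. Since $\Omg_r = \{\psi < r\}$ and $\cA_s = \psi^{-1}(s)$ for $s\in(0,1)$, this gives $|\Omg_r| = \int_0^r \int_{\cA_s}|\nb\psi|^{-1}\dd \cH^1\dd s$; inserting $|\nb\psi| = |\nb v_\Omg|/(2Ms)$ on $\cA_s$ from~\eqref{eq:nabla_psi} produces the master formula
\[
|\Omg_r| \;=\; 2M\int_0^r s\int_{\cA_s}\frac{\dd \cH^1(x)}{|\nb v_\Omg(x)|}\,\dd s.
\]

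To establish~(i), it suffices to show that $h(s) := s\int_{\cA_s}|\nb v_\Omg|^{-1}\dd\cH^1$ is $C^\infty$ on $(0,1)$. Fix $s_0\in(0,1)$; the level curve $\cA_{s_0}$ does not contain the critical point $x_\star$, so by Proposition~\ref{prop:convex}(i) the gradient $\nb v_\Omg$ is non-vanishing on an open neighbourhood of $\cA_{s_0}$. Flowing along the smooth vector field $X := \nb v_\Omg/|\nb v_\Omg|^2$ (which satisfies $X\cdot\nb v_\Omg = 1$, so its orbits cross level sets transversally) yields a smooth diffeomorphism $(\sigma,s)\mapsto x(\sigma,s)$ from $S^1\times(s_0-\eps,s_0+\eps)$ onto a tubular neighbourhood of $\cA_{s_0}$, with $x(\cdot,s)$ parameterizing $\cA_s$. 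In these coordinates
\[
\int_{\cA_s}\frac{\dd\cH^1(x)}{|\nb v_\Omg(x)|} \;=\; \int_{S^1}\frac{|\p_\sigma x(\sigma,s)|}{|\nb v_\Omg(x(\sigma,s))|}\dd\sigma,
\]
which is a smooth function of $s$. Hence $h\in C^\infty((0,1))$, and the master formula then gives $|\Omg_r|\in C^\infty((0,1))$ together with $\frac{\dd}{\dd r}|\Omg_r| = 2Mr\int_{\cA_r}|\nb v_\Omg|^{-1}\dd\cH^1$.

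Item~(iii) is immediate from the derivative formula: it rearranges to $\int_{\cA_r}|\nb v_\Omg|^{-1}\dd\cH^1 = \frac{1}{2Mr}\frac{\dd}{\dd r}|\Omg_r|$, a continuous function of $r\in(0,1)$ by~(i), so the essential infimum in the definition of $F(\Omg)$ coincides with the plain infimum. For~(ii), I would apply the divergence theorem on the smooth domain $\Omg_r$ (smoothness of $\cA_r$ as a Jordan curve again following from $\nb v_\Omg\ne 0$ there) to the vector field $\nb v_\Omg$. The outer unit normal to $\Omg_r$ along $\cA_r$ is $-\nb v_\Omg/|\nb v_\Omg|$, because $v_\Omg$ is larger inside $\Omg_r$ than outside; combined with $-\Delta v_\Omg = 1$ this yields
\[
-|\Omg_r| \;=\; \int_{\Omg_r}\Delta v_\Omg\,\dd x \;=\; -\int_{\cA_r}|\nb v_\Omg|\,\dd\cH^1,
\]
so $\int_{\cA_r}|\nb v_\Omg|/r^2\,\dd\cH^1 = |\Omg_r|/r^2$, again continuous in $r$ by~(i), and the essential supremum equals the supremum. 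The only mildly technical step is the smoothness of $h$ in~(i); it rests on the standard fact, handled by the tubular chart above, that integrals of smooth integrands over a family of regular level curves of a smooth function depend smoothly on the level.
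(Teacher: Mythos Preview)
Your proof is correct and follows essentially the same route as the paper: the divergence theorem for~(ii), and the co-area formula applied to $\psi$ to obtain the master formula for $|\Omg_r|$ and hence~(iii). The only difference is in~(i): where the paper simply invokes a reference (Chavel) for the smooth dependence of sublevel-set volumes on regular values, you supply the underlying tubular-neighbourhood/flow argument explicitly, which is a legitimate and self-contained alternative.
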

\begin{proof}
	(i)
	Recall that $x_\star\in\Omg$ denotes the point where the function $v_\Omg$ attains its unique global maximum.
	Since by Proposition~\ref{prop:convex}\,(i) there holds $\nb v_\Omg(x) \ne 0$ for all $x\in\Omg\sm\{x_\star\}$,
	we get that all the values of the function $v_\Omg$
	in the interval $(0,\max v_\Omg)$ are regular. Hence, we conclude from~\cite[Chapter IV, Theorem 1]{Ch} that the function $(0,1)\ni r\mapsto |\Omg_r|$ is $C^\infty$-smooth.
	
	\smallskip
	
	\noindent (ii) By the implicit function theorem~\cite[Theorem 3.3.1]{KP02}, 	the boundary $\cA_r$ of $\Omg_r$ is $C^\infty$-smooth.
	Let us denote by $\nu_r$ the outer unit normal for $\Omg_r$.  Hopf's lemma yields that $\frac{\p v_\Omg}{\p\nu_r}(x)<0$ on \(\cA_r\).
	Using the fact that $\cA_r$ is a level set of $v_\Omg$
	we get that the normal derivative $\frac{\p v_\Omg}{\p\nu_r}(x)$ of the torsion function at $x\in\cA_r$ equals $-|\nb v_\Omg(x)|$. 
	Then we can rewrite the formula for $G(\Omg)$ in~\eqref{eq:AB} as follows
	\[
	\begin{aligned}
		G(\Omg) &= \essup_{r\in(0,1)} \left(-\frac{1}{r^2}\int_{\cA_r}\frac{\p v_\Omg}{\p\nu_r}(x)\dd\cH^1(x)\right)\\
		&
		= \essup_{r\in(0,1)} \left(-\frac{1}{r^2}\int_{\Omg_r}\Delta v_\Omg(x)\dd x\right) \\
		& =\essup_{r\in(0,1)} \left(\frac{1}{r^2}\int_{\Omg_r}\dd x\right)
		=\essup_{r\in(0,1)} \frac{|\Omg_r|}{r^2},
	\end{aligned}
	\]  
	where we performed the integration by parts and used that $-\Delta v_\Omg = 1$. In view of the smoothness of the function $(0,1)\ni r\mapsto|\Omg_r|$ shown in item~(i) we are allowed
	to replace the essential supremum by the ordinary supremum. Thus, we have
	\[
		G(\Omg) = \sup_{r\in(0,1)} \frac{|\Omg_r|}{r^2}.
	\] 
	
	\smallskip
	
	\noindent (ii)
	By the co-area formula~\eqref{eq:coarea} with $\psi$ as in~\eqref{eq:psi} we get for any $r\in(0,1)$ that
	\[
	|\Omg_r| = \int_0^r\int_{\cA_t}\frac{1}{|\nb \psi(x)|}\dd\cH^1(x)\dd t = 2\max v_\Omg \int_0^r t\int_{\cA_t}\frac{1}{|\nb v_\Omg(x)|}\dd \cH^1(x)\dd t. 
	\]
	Hence, we obtain that the derivative of the function $(0,1)\ni r\mapsto |\Omg_r|$ is
	given by the formula
	\[
	\frac{\dd}{\dd r}|\Omg_r| = 2\max v_\Omg r\int_{\cA_r}\frac{1}{|\nb v_\Omg(x)|}\dd \cH^1(x).
	\]
	As a result we get that
	\[
	\int_{\cA_r}\frac{1}{|\nb v_\Omg(x)|}\dd \cH^1(x)=
	\frac{\frac{\dd}{\dd r}|\Omg_r|}{2\max v_\Omg r},
	\]
	and the claim follows from the definition of $F(\Omg)$ in~\eqref{eq:AB}, in which we are allowed to replace the essential infimum by the ordinary infimum taking smoothness of the function $r\mapsto |\Omg_r|$ into account.
\end{proof}

Let $D^2 v_\Omg$
denote the Hessian of the torsion function. It follows from the equation $-\Delta v_\Omg = 1$ that $\Tr(D^2 v_\Omg(x)) = -1$ for all $x\in\Omg$.
Recall that $x_\star\in\Omg$ is the point where torsion function $v_\Omg$ attains its maximal value. By Proposition~\ref{prop:convex}\,(ii), $\cH_\star := D^2 v_\Omg(x_\star)\in\dR^{2\times2}$ is negative definite.
In the next proposition we obtain a lower bound for $G(\Omg)$ in terms of the maximal value of the torsion function, the determinant of $\cH_\star$ and the area of the domain.  
\begin{prop}\label{prop:GOmg}
	Let $\Omg\subset\dR^2$ be a bounded, convex, $C^\infty$-smooth domain. Let  $v_\Omg\in C^\infty(\ov\Omg)$ be its torsion function.
	Let $G(\Omg)$ be defined as in~\eqref{eq:AB}.
	Let $\cH_\star\in\dR^{2\times 2}$ be the Hessian of the torsion function $v_\Omg$ at the point of its maximum.
	Then there holds 
	\[
		G(\Omg)\geq \max\left\{ \frac{2\pi \max v_\Omg}{\sqrt{\det\cH_\star}},|\Omega|\right\}.
	\]
\end{prop}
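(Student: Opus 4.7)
The plan is to invoke Lemma~\ref{lem:FG2}(ii), which gives the reformulation $G(\Omg) = \sup_{r \in (0,1)} |\Omg_r|/r^2$, and then extract the two lower bounds in the statement from the one-sided limits of the function $r \mapsto |\Omg_r|/r^2$ at the two endpoints of $(0,1)$.

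For $G(\Omg) \ge |\Omg|$, I would simply let $r \to 1^-$: the threshold $\max v_\Omg\,(1-r^2)$ tends to $0$, and since $v_\Omg > 0$ throughout $\Omg$ (by the maximum principle, as noted just after~\eqref{eq:torsion}), monotone convergence yields $|\Omg_r| \to |\Omg|$, whence $G(\Omg) \ge \lim_{r \to 1^-} |\Omg_r|/r^2 = |\Omg|$.

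For $G(\Omg) \ge 2\pi \max v_\Omg/\sqrt{\det \cH_\star}$, I would analyse $|\Omg_r|$ as $r \to 0^+$ via a second-order Taylor expansion of $v_\Omg$ at $x_\star$. Using $\nb v_\Omg(x_\star) = 0$,
\[
    \max v_\Omg - v_\Omg(x) \;=\; Q(x) + o(|x-x_\star|^2), \qquad Q(x) := -\tfrac{1}{2}\,(x-x_\star)^\top \cH_\star (x-x_\star),
\]
and $Q$ is a positive definite quadratic form by Proposition~\ref{prop:convex}(ii). Given any $\varepsilon \in (0,1)$, there exists $\delta > 0$ such that $(1-\varepsilon)\,Q(x) \le \max v_\Omg - v_\Omg(x) \le (1+\varepsilon)\,Q(x)$ on the ball $\cB_\delta(x_\star)$. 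Since $\max v_\Omg - v_\Omg$ has a strictly positive infimum on the compact set $\ov\Omg \setminus \cB_\delta(x_\star)$, one has $\Omg_r \subset \cB_\delta(x_\star)$ for all sufficiently small $r$, and hence $\Omg_r$ is sandwiched between the two concentric ellipses $\{Q(x) < r^2 \max v_\Omg / (1 \pm \varepsilon)\}$.

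A short computation in principal coordinates of $\cH_\star$ (using $\det(-\cH_\star) = \det \cH_\star$ in dimension two) gives the area of $\{Q(x) < c\}$ as $2\pi c/\sqrt{\det \cH_\star}$. Dividing the areas of the two bracketing ellipses by $r^2$ and letting first $r \to 0^+$ and then $\varepsilon \to 0^+$ produces $\lim_{r \to 0^+} |\Omg_r|/r^2 = 2\pi \max v_\Omg / \sqrt{\det \cH_\star}$, which is the desired lower bound on $G(\Omg)$. The only mildly technical point is justifying the inclusion $\Omg_r \subset \cB_\delta(x_\star)$ for small $r$; the ellipse area formula and the Taylor expansion itself are routine.
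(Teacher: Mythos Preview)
Your proposal is correct and follows essentially the same route as the paper: both invoke Lemma~\ref{lem:FG2}(ii), obtain $G(\Omg)\ge|\Omg|$ from the limit $r\to1^-$, and obtain the Hessian bound from the limit $r\to0^+$ via a second-order Taylor expansion at $x_\star$ sandwiching $\Omg_r$ between two ellipses. The only cosmetic difference is that the paper diagonalises $\cH_\star$ explicitly (using $\Tr\cH_\star=-1$) and perturbs each eigenvalue additively by $\pm\eps$, whereas you work with the abstract quadratic form $Q$ and a multiplicative $(1\pm\eps)$ perturbation; the computations and conclusions are otherwise identical.
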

\begin{proof}
	Let us denote by $\xi(\Omg) \in (0,\frac12]$ the absolute value of the largest eigenvalue of $\cH_\star$.
	Taking into account that $\Tr\cH_\star = -1$ and that $\cH_\star$ is negative definite, we get that the absolute value of the second eigenvalue of $\cH_\star$ is given by $1-\xi(\Omg)$.
	
	We choose the coordinate system in $\dR^2$ so that  $x_\star = 0$ (the maximum of the torsion function is attained at the origin) and that the Hessian of the torsion function at the origin is a diagonal matrix
	with entries $-\xi(\Omg)$ and $-(1-\xi(\Omg))$.
	By Taylor's expansion we get
	\[
		v_\Omg(x_1,x_2) = \max v_\Omg - \frac{\xi(\Omg)}{2}x_1^2 -\frac{1-\xi(\Omg)}{2}x_2^2 + \cO(|x|^3),\qquad x = (x_1,x_2)\arr 0.
	\]
	Therefore, for any $\eps < \xi(\Omg)/2$ there exists a sufficiently small $r_\eps > 0$ such that 
	\[
		\left|v_\Omg(x_1,x_2) - \max v_\Omg
		+ \frac{\xi(\Omg)}{2}x_1^2 +\frac{1-\xi(\Omg)}{2}x_2^2\right|\le \eps|x|^2
	\]
	for all $x=(x_1,x_2)\in \cB_{r_\eps}(0)$. Hence, we get that for all sufficiently small $r > 0$ there holds
	\[
	\begin{aligned}
		\Omg_r&\supset\left\{(x_1,x_2)\in\dR^2\colon 
		\left(\frac{\xi(\Omg)}{2}+\eps\right)x_1^2+
		\left(\frac{1-\xi(\Omg)}{2}+\eps\right)x_2^2 < \max v_\Omg r^2\right\},\\		
		\Omg_r&\subset			
				\left\{(x_1,x_2)\in\dR^2\colon
		\left(\frac{\xi(\Omg)}{2}-\eps\right)x_1^2+
		\left(\frac{1-\xi(\Omg)}{2}-\eps\right)x_2^2 < \max v_\Omg r^2\right\}.		
	\end{aligned}
	\]			
	Hence, we get using the formula for the area of an ellipse
	\[
		\frac{\pi\max v_\Omg r^2}
		{\sqrt{\left(\frac{\xi(\Omg)}{2}+\eps\right)
				\left(\frac{1-\xi(\Omg)}{2}+\eps\right)}}
			\le |\Omg_r|\le 
					\frac{\pi\max v_\Omg r^2}
			{\sqrt{\left(\frac{\xi(\Omg)}{2}-\eps\right)
					\left(\frac{1-\xi(\Omg)}{2}-\eps\right)}}
	\]
	for all sufficiently small $r >0$. Since $\eps > 0$ can be chosen arbitrarily small we get using Lemma~\ref{lem:FG2}\,(ii) that
	\begin{equation}\label{eq:Glim}
		G(\Omg) \geq  \lim_{r\arr0^+}\frac{|\Omg_r|}{r^2} = 
		\frac{2\pi \max v_\Omg}{\sqrt{\xi(\Omg)(1-\xi(\Omg))}} = \frac{2\pi \max v_\Omg}{\sqrt{\det \cH_\star}}.
	\end{equation}
	Moreover,  from the formula for $G(\Omg)$ in Lemma~\ref{lem:FG2}\,(ii) combined with the fact
	\[
		\lim_{r\arr1^-}\frac{|\Omg_r|}{r^2} = |\Omg|,
	\]
	we get
	\[
		G(\Omega) \ge |\Omg|.\qedhere
	\]
\end{proof}
In the next proposition we obtain a universal lower bound on $F(\Omg)$ and an upper bound on $F(\Omg)$ in terms of the Hessian $\cH_\star$.
\begin{prop}\label{prop:FOmg}
	Let $\Omg\subset\dR^2$ be a bounded, convex, $C^\infty$-smooth domain. Let  $v_\Omg\in C^\infty(\ov\Omg)$ be its torsion function.
	Let $F(\Omg)$ be defined as in~\eqref{eq:AB}.
	Let $\cH_\star\in\dR^{2\times 2}$ be the Hessian of the torsion function $v_\Omg$ at the point of its maximum.
	Then it holds that
	\[
	4\pi \leq F(\Omg)\leq \frac{2\pi}{\sqrt{\det\cH_\star}}.
	\]
\end{prop}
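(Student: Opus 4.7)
My plan is to handle the two bounds separately using the reformulation
\[
F(\Omg) = \inf_{r \in (0,1)} \int_{\cA_r} \frac{1}{|\nb v_\Omg(x)|}\dd \cH^1(x),
\]
where the essential infimum becomes an ordinary infimum by the smoothness argument in Lemma~\ref{lem:FG2}\,(i). The universal lower bound $F(\Omg) \ge 4\pi$ will follow from a Cauchy--Schwarz inequality on each level curve combined with the identity $\int_{\cA_r} |\nb v_\Omg| \dd\cH^1 = |\Omg_r|$ (already derived in the proof of Lemma~\ref{lem:FG2}\,(ii)) and the classical isoperimetric inequality applied to the convex sets $\Omg_r$. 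The upper bound $F(\Omg) \le 2\pi/\sqrt{\det \cH_\star}$ will come from passing to the limit $r \to 0^+$ and a Taylor expansion of $v_\Omg$ at $x_\star$, in the spirit of the argument in Proposition~\ref{prop:GOmg}.

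For the lower bound, Cauchy--Schwarz on $\cA_r$ gives
\[
|\cA_r|^2 = \Bigl(\int_{\cA_r} 1\dd\cH^1\Bigr)^2 \le \int_{\cA_r}|\nb v_\Omg(x)|\dd\cH^1(x)\cdot \int_{\cA_r}\frac{1}{|\nb v_\Omg(x)|}\dd\cH^1(x).
\]
Using Hopf's lemma together with the divergence theorem and $-\Delta v_\Omg =1$ (as inside the proof of Lemma~\ref{lem:FG2}\,(ii)), the first factor on the right equals $|\Omg_r|$. Since Proposition~\ref{prop:convex}\,(i) ensures $\Omg_r$ is convex with smooth boundary $\cA_r$, the planar isoperimetric inequality yields $|\cA_r|^2 \ge 4\pi |\Omg_r|$. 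Substituting and cancelling $|\Omg_r|$ produces $\int_{\cA_r}\frac{1}{|\nb v_\Omg|}\dd\cH^1 \ge 4\pi$ uniformly in $r$, hence $F(\Omg) \ge 4\pi$.

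For the upper bound, I choose Cartesian coordinates so that $x_\star = 0$ and $\cH_\star$ is diagonal with eigenvalues $-a_1,-a_2$, so $\det\cH_\star = a_1 a_2$. The second-order Taylor expansion reads
\[
v_\Omg(x_1,x_2) = \max v_\Omg - \tfrac12(a_1 x_1^2 + a_2 x_2^2) + O(|x|^3), \qquad \nb v_\Omg(x) = -(a_1 x_1, a_2 x_2) + O(|x|^2).
\]
The rescaling $y_i = \sqrt{a_i}\, x_i$ turns the leading-order level set into the circle of radius $\sqrt{2\max v_\Omg}\, r$; parametrising it by an angle $\phi$ and expressing both $\dd\cH^1(x)$ and $|\nb v_\Omg(x)|$ in $\phi$ produces a common factor $\sqrt{a_1\cos^2\phi + a_2\sin^2\phi}$ in numerator and denominator, which cancels. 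The resulting integrand is $\dd\phi/\sqrt{a_1 a_2}+o(1)$, so
\[
\lim_{r\to 0^+}\int_{\cA_r}\frac{1}{|\nb v_\Omg(x)|}\dd\cH^1(x) = \frac{2\pi}{\sqrt{\det\cH_\star}}.
\]
The infimum characterisation of $F(\Omg)$ then forces $F(\Omg)\le 2\pi/\sqrt{\det\cH_\star}$.

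The main technical obstacle is controlling the Taylor remainder uniformly on $\cA_r$ so that the cancellation in the upper-bound calculation is rigorous. I plan to deal with this as in the proof of Proposition~\ref{prop:GOmg}: for any fixed $\eps>0$, sandwich $\cA_r$ for small $r$ between two concentric perturbed ellipses, compute the two resulting integrals explicitly via the change of variables described above, and let $\eps\to 0^+$ after $r\to 0^+$. Everything else, including the cancellation of the $\sqrt{a_1\cos^2\phi + a_2\sin^2\phi}$ factors, is a direct computation.
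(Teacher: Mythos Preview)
Your lower bound argument is identical to the paper's: Cauchy--Schwarz on $\cA_r$, the identity $\int_{\cA_r}|\nb v_\Omg|\,\dd\cH^1=|\Omg_r|$ from the divergence theorem, and the isoperimetric inequality $|\cA_r|^2\ge 4\pi|\Omg_r|$.

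For the upper bound you take a different route. The paper does not compute $\lim_{r\to 0^+}\int_{\cA_r}|\nb v_\Omg|^{-1}\dd\cH^1$ directly; instead it invokes the reformulation $F(\Omg)=\frac{1}{2\max v_\Omg}\inf_r\frac{\frac{\dd}{\dd r}|\Omg_r|}{r}$ from Lemma~\ref{lem:FG2}\,(iii), bounds the infimum by $\liminf_{r\to 0^+}$, and then applies L'Hospital's rule together with the already established limit $\lim_{r\to 0^+}|\Omg_r|/r^2 = 2\pi\max v_\Omg/\sqrt{\det\cH_\star}$ from the proof of Proposition~\ref{prop:GOmg}. This is shorter because it recycles that area asymptotic rather than redoing a Taylor analysis. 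Your direct parametrisation is more self-contained and actually yields a stronger conclusion (existence of the limit of the integrand, not just a $\liminf$ bound), and your cancellation of the factor $\sqrt{a_1\cos^2\phi+a_2\sin^2\phi}$ is correct.

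One caveat on your remainder control: the sandwiching device from Proposition~\ref{prop:GOmg} works because set inclusion is monotone for \emph{areas}, so $E_-\subset\Omg_r\subset E_+$ immediately gives $|E_-|\le|\Omg_r|\le|E_+|$. That monotonicity does not transfer to the line integral $\int_{\cA_r}|\nb v_\Omg|^{-1}\dd\cH^1$; knowing $\cA_r$ lies between two ellipses does not by itself bound an integral taken over $\cA_r$ by integrals over those ellipses. To make your argument rigorous you should instead parametrise $\cA_r$ itself (e.g.\ via the implicit function theorem or the Morse lemma) as $x(\phi,r)=r\xi(\phi)+O(r^2)$ with $\xi$ tracing the limiting ellipse, and then expand both $|\partial_\phi x|$ and $|\nb v_\Omg(x)|$ to first order in $r$; the ratio is $|\xi'(\phi)|/|\cH_\star\xi(\phi)|+O(r)$, uniformly in $\phi$, which integrates to $2\pi/\sqrt{\det\cH_\star}+O(r)$. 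This is routine but is not the same mechanism as the area sandwich you cite.
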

\begin{proof}
	First, we will show the lower bound on $F(\Omg)$.
	Using the geometric isoperimetric inequality, the Cauchy-Schwarz inequality, and the computation based on the integration by parts as in the proof of Lemma~\ref{lem:FG2}\,(ii) we get
	\begin{equation}\label{eq:isop_ratio}
		\begin{aligned}
			\int_{\cA_r}\frac{1}{|\nb v_\Omg(x)|}\dd\cH^1(x)&=
			\frac{\displaystyle\left(\int_{\cA_r}\frac{1}{|\nb v_\Omg(x)|}\dd\cH^1(x)\right)\left(\int_{\cA_r}|\nb v_\Omg(x)|\dd\cH^1(x)\right)}{
				\displaystyle\int_{\cA_r}|\nb v_\Omg(x)|\dd\cH^1(x)}\\
			&\ge \frac{|\cA_r|^2}{-
				\displaystyle\int_{\cA_r}\frac{\p v_\Omg}{\p\nu_r}(x)\dd\cH^1(x)} = \frac{|\cA_r|^2}{|\Omg_r|} \ge 4\pi.
		\end{aligned}
	\end{equation}
	Thus, it follows from the formula for $F(\Omg)$ in~\eqref{eq:AB} that $F(\Omg) \ge 4\pi$.
	
	Next we will show the upper bound on $F(\Omg)$. Using 
	the formula for $F(\Omg)$ in Lemma~\ref{lem:FG2}\,(iii), L'Hospital's rule~\cite[Theorem II]{T52} and the limit in~\eqref{eq:Glim} we arrive at
	\[
		F(\Omg) \le \frac{1}{\max v_\Omg}\liminf_{r\arr0^+}\frac{\frac{\dd}{\dd r}|\Omg_r|}{2r} \le \frac{1}{\max v_\Omg}\lim_{r\arr0^+}\frac{|\Omg_r|}{r^2} = \frac{2\pi}{\sqrt{\det\cH_\star}}.
	\]
	We remark that in the second step of the above computation we have inequality, because we have not established existence of the limit 
	$\lim_{r\arr 0^+}\frac{\frac{\dd}{\dd r}|\Omg_r|}{r}$.
\end{proof}
From the bounds in Propositions~\ref{prop:GOmg} and~\ref{prop:FOmg} we get the following consequence.
\begin{cor}
	Let $\Omg\subset\dR^2$ be a bounded, convex, $C^\infty$-smooth domain with the torsion function $v_\Omg$.
	Let $F(\Omg)$ and $G(\Omg)$ be defined as in~\eqref{eq:AB}. Then the factor $C(\Omg) = \frac{1}{\max v_\Omg}\frac{G(\Omg)}{F(\Omg)}$ in Theorem~\ref{thm:main} satisfies $C(\Omg)\ge 1$.	
\end{cor}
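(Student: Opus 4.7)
The plan is to combine the two preceding propositions directly, exploiting the fact that both estimates involve the same quantity $\sqrt{\det\cH_\star}$, which cancels cleanly in the ratio $G(\Omg)/F(\Omg)$.

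Concretely, I would first recall from Proposition~\ref{prop:GOmg} the lower bound
\[
G(\Omg)\ge \frac{2\pi\,\max v_\Omg}{\sqrt{\det\cH_\star}},
\]
obtained by taking the $r\to 0^+$ limit of $|\Omg_r|/r^2$. Then I would pair it with the upper bound from Proposition~\ref{prop:FOmg}, namely
\[
F(\Omg)\le \frac{2\pi}{\sqrt{\det\cH_\star}}.
\]
Dividing the first by the second, the factor $2\pi/\sqrt{\det\cH_\star}$ cancels exactly, leaving
\[
\frac{G(\Omg)}{F(\Omg)}\ge \max v_\Omg,
\]
so that $C(\Omg)=\frac{1}{\max v_\Omg}\frac{G(\Omg)}{F(\Omg)}\ge 1$.

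There is no real obstacle here, since the two preceding propositions have already done all the work: the lower bound on $G(\Omg)$ and the upper bound on $F(\Omg)$ are both driven by the second-order behavior of the torsion function near its interior maximum $x_\star$, and it is exactly this common asymptotics that produces the matching factor $\sqrt{\det\cH_\star}$. In some sense the corollary simply records that the trivial local comparison at $x_\star$ is tight enough to guarantee $C(\Omg)\ge 1$, with equality in the disk case (where $v_\Omg$ is radial, the superlevel sets are concentric disks, and both bounds become exact identities at every radius).
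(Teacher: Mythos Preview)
Your proposal is correct and matches the paper's own proof exactly: the paper simply states that $C(\Omg)\ge 1$ follows by combining the lower bound on $G(\Omg)$ from Proposition~\ref{prop:GOmg} with the upper bound on $F(\Omg)$ from Proposition~\ref{prop:FOmg}, which is precisely the cancellation of the common factor $2\pi/\sqrt{\det\cH_\star}$ you describe.
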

\begin{proof}
	The inequality $C(\Omg)\ge1$ follows via a combination of the lower bound on $G(\Omg)$ in Proposition~\ref{prop:GOmg} and of the upper bound on $F(\Omg)$ in Proposition~\ref{prop:FOmg}.
\end{proof}
\begin{exam}\label{ex:square}
We  discuss a special class of convex domains $\Omg\subset\dR^2$ satisfying the following symmetries
\begin{equation}\label{eq:symm}
	\Omg = \{(-x_1,x_2)\in\dR^2\colon (x_1,x_2)\in\Omg\} = \{(x_2,x_1)\in\dR^2\colon (x_1,x_2)\in\Omg\}.
\end{equation}
A square and a disk are examples of such domains. One can also construct $C^\infty$-smooth convex
domains satisfying~\eqref{eq:symm} different from the disk.
For example, the domain 
\[
\{(x_1,x_2)\in\dR^2\colon x_1^4+x_2^4 < 1\}
\]
satisfies~\eqref{eq:symm}.
We also immediately get from~\eqref{eq:symm} that 
\[
	\Omg = \{(x_1,-x_2)\in\dR^2\colon (x_1,x_2)\in\Omg\}.
\]	
Combining the symmetries and convexity of $\Omg$ we conclude that the origin belongs to $\Omg$.
For a bounded, convex $C^\infty$-smooth domain $\Omg$ satisfying~\eqref{eq:symm}
we get that the torsion function $v_\Omg$ satisfies
\begin{equation}\label{eq:symmv}
	v_\Omg(x_1,x_2) = v_\Omg(-x_1,x_2) = v_\Omg(x_1,-x_2) = v_\Omg(x_2,x_1)\qquad \text{for all}\,\, (x_1,x_2)\in\Omg.
\end{equation}
Hence, we conclude that
\begin{equation}\label{eq:p12}	
\begin{aligned}
	&\p_1 v_\Omg(0,x_2) = 0\qquad\text{for all}\,\,\, (0,x_2)\in\Omg,\\  
	&\p_2 v_\Omg(x_1,0) = 0\qquad\text{for all}\,\,\, (x_1,0)\in\Omg.  
\end{aligned}	
\end{equation}
Therefore, the gradient of the torsion function vanishes at the origin. Thus, the maximum of the torsion function is attained also at the origin. From the symmetries in~\eqref{eq:symmv}, the properties~\eqref{eq:p12}
and the definition of the torsion function we arrive that 
\[
	\p^2_1 v_\Omg(0) = 	\p^2_2 v_\Omg(0) = -\frac12,\qquad 
	\p_{12}v_\Omg(0) = 0.
\]
We conclude that the Hessian $\cH_\star$ of $v_\Omg$ at the origin satisfies 
\[	
\det\cH_\star = \frac14.
\]
For a bounded, convex, $C^\infty$-smooth domain $\Omg\subset\dR^2$ satisfying the symmetries~\eqref{eq:symm} and being different from a disk
the following holds
\[
	\frac{2\pi\max v_\Omg}{\sqrt{\det\cH_\star}} = 4\pi\max v_\Omg < |\Omg|,
\]
where we used that $\max v_\Omg < \frac{R^2}{4}$ where $R$ is the radius of the disk of the same area as $\Omg$.
By inspecting Proposition~\ref{prop:GOmg} and its proof we come to the conclusion that the supremum in the characterisation of $G(\Omg)$ in Lemma~\ref{lem:FG2}\,(ii) is not attained in the limit $r\arr 0^+$ for this class of convex domains. Furthermore,   by inspecting Proposition~\ref{prop:FOmg},  we find that the upper and lower bounds are both equal to $4\pi$, hence  
\[
	F(\Omg)  = 4\pi
\]
and the infimum in the characterisation of $F(\Omg)$ in Lemma~\ref{lem:FG2}\,(iii) is thus attained in the limit $r\arr 0^+$. As a result, we get that for any bounded, convex $C^\infty$-smooth domain $\Omg\subset\dR^2$ satisfying~\eqref{eq:symm} and being different from the disk we have
\[
	C(\Omg) = \frac{1}{\max v_\Omg} \frac{G(\Omg)}{F(\Omg)} \ge \frac{|\Omg|}{4\pi\max v_\Omg}  > 1. 
\]
Next, we will show that for bounded, convex, planar $C^\infty$-smooth domains satisfying~\eqref{eq:symm} and being different from the disk the bound in Theorem~\ref{thm:main} does not imply the inequality in Conjecture~\ref{conj:FH} for all sufficiently small $b>0$.
Let us assume without loss of generality that $|\Omg| = \pi$.
It will be more convenient to work with an equivalent version of the bound in Theorem~\ref{thm:main}, which appears in~\eqref{eq:almost}
\begin{equation*}
	\mu_1^b(\Omg) \le \frac{1}{4(\max v_\Omg)^2}\frac{G(\Omg)}{F(\Omg)}\mu_1^{4b\max v_\Omg}(\cB_1).
\end{equation*}
It follows from $G(\Omg) \ge |\Omg| = \pi$, $F(\Omg) = 4\pi$, $\max v_\Omg < \frac{1}{4}$ and the monotonicity of the function in Proposition~\ref{prop:disk}\,(iii)
that for all sufficiently small $b > 0$
\[
\begin{aligned}
	\frac{1}{4(\max v_\Omg)^2}\frac{G(\Omg)}{F(\Omg)}\mu_1^{4b\max v_\Omg}(\cB_1)\ge \frac{1}{16(\max v_\Omg)^2}\mu_1^{4b\max v_\Omg}(\cB_1) > \mu_1^b(\cB_1). 
\end{aligned}
\]
Thus, the bound in Theorem~\ref{thm:main} is weaker than the isoperimetric inequality in Conjecture~\ref{conj:FH} for the class of convex domains satisfying~\eqref{eq:symm} and sufficiently small intensities of the magnetic field.
\end{exam}
\section{The case of the ellipse}\label{sec:FH}
In this section we show that the upper bound obtained in Theorem~\ref{thm:main} implies the inequality in Conjecture~\ref{conj:FH}
for $\Omg$ being an ellipse satisfying $b|\Omg| < \pi$. In the argument we exploit the explicit expression of the torsion function for the ellipse.
Recall that for the semi-axes $\aa, \beta >0$ the ellipse 
is defined in the standard way by
\[
	\cE = \cE_{\aa,\beta} := \left\{(x,y)\in\dR^2\colon\frac{x^2}{\alpha^2} + \frac{y^2}{\beta^2} < 1\right\}.
\]	
\begin{thm}\label{thm:ellipse}
	Assume that $b\aa\beta < 1$.   Then the following inequality holds
	\begin{equation}\label{eq:ellipse1}
		\mu_1^b(\cE_{\aa,\beta}) \le \mu_1^b(\cB_{R_{\alpha,\beta}}),\qquad \text{for}\,\,\, R_{\alpha,\beta}= \frac{\sqrt{2}\aa\beta}{\sqrt{\aa^2+\beta^2}}.
	\end{equation}
	As a consequence,  if $\alpha\not=\beta$ there holds 
	\begin{equation}\label{eq:ellipse2}
		\mu_1^b(\cE_{\aa,\beta}) <  \mu_1^b(\cB_{\sqrt{\aa\beta}}).
	\end{equation}
\end{thm}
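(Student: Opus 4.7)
\medskip

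\textbf{Proof plan.} The strategy is to apply Theorem~\ref{thm:main} with $\Omg = \cE_{\aa,\beta}$, using the fact that for an ellipse the torsion function is known in closed form, namely
\[
v_{\cE_{\aa,\beta}}(x,y) = \frac{\aa^2\beta^2}{2(\aa^2+\beta^2)}\left(1-\frac{x^2}{\aa^2}-\frac{y^2}{\beta^2}\right).
\]
First I would verify this by checking $-\Delta v_{\cE} = 1$ and $v_{\cE}|_{\p\cE} = 0$. From this formula I would read off $\max v_{\cE} = \frac{\aa^2\beta^2}{2(\aa^2+\beta^2)}$, so that $2\sqrt{\max v_{\cE}} = R_{\aa,\beta}$. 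The assumption $b\aa\beta < 1$ gives $b|\cE_{\aa,\beta}| = b\pi\aa\beta < \pi$, so Theorem~\ref{thm:main} applies.

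Next I would identify the level and superlevel sets of $v_{\cE}$ using the normalization in~\eqref{eq:Ar} and~\eqref{eq:Omgr}: a direct computation shows $\cA_r = \{(x,y):\frac{x^2}{\aa^2}+\frac{y^2}{\beta^2}=r^2\}$ and $\Omg_r$ is the corresponding interior, which is the ellipse with semi-axes $\aa r,\beta r$ and hence $|\Omg_r| = \pi\aa\beta\,r^2$ for all $r\in(0,1)$. Inserting this into the formulas of Lemma~\ref{lem:FG2}(ii)--(iii), I obtain
\[
G(\cE_{\aa,\beta}) = \sup_{r\in(0,1)}\frac{\pi\aa\beta\,r^2}{r^2} = \pi\aa\beta,\qquad
F(\cE_{\aa,\beta}) = \frac{1}{2\max v_{\cE}}\inf_{r\in(0,1)}\frac{2\pi\aa\beta\,r}{r} = \frac{\pi\aa\beta}{\max v_{\cE}}.
\]
Consequently $C(\cE_{\aa,\beta}) = \frac{1}{\max v_{\cE}}\frac{G(\cE_{\aa,\beta})}{F(\cE_{\aa,\beta})} = 1$, so Theorem~\ref{thm:main} yields exactly~\eqref{eq:ellipse1}.

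For~\eqref{eq:ellipse2}, I would compare $R_{\aa,\beta}$ with the equal-area radius $\sqrt{\aa\beta}$. By the strict AM--GM inequality $\aa^2+\beta^2 > 2\aa\beta$ (since $\aa\ne\beta$), hence
\[
R_{\aa,\beta}^2 = \frac{2\aa^2\beta^2}{\aa^2+\beta^2} < \aa\beta,
\]
so $R_{\aa,\beta} < \sqrt{\aa\beta}$. Since $b\aa\beta < 1$ gives $\sqrt{\aa\beta} < b^{-1/2}$, both radii lie in the interval $(0,b^{-1/2})$ where Proposition~\ref{prop:disk}(ii) furnishes the strict monotonicity $\mu_1^b(\cB_{R_{\aa,\beta}}) < \mu_1^b(\cB_{\sqrt{\aa\beta}})$. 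Chaining this with~\eqref{eq:ellipse1} yields~\eqref{eq:ellipse2}.

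The argument is essentially a direct substitution into the machinery already developed; the only moderate obstacle is the bookkeeping to confirm that the ellipse saturates the inequality $C(\Omg)\ge 1$ (i.e., that the supremum in Lemma~\ref{lem:FG2}(ii) and the infimum in Lemma~\ref{lem:FG2}(iii) are both attained and give matching values $\pi\aa\beta$ and $\pi\aa\beta/\max v_{\cE}$ respectively), which is what makes the purely geometric factor collapse to $1$ for ellipses.
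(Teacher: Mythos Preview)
Your proposal is correct and follows essentially the same route as the paper: both arguments use the explicit torsion function of the ellipse to identify $\max v_{\cE}$ and the level sets, compute $F(\cE)$ and $G(\cE)$ to conclude $C(\cE)=1$, and then invoke Theorem~\ref{thm:main} together with Proposition~\ref{prop:disk}(ii). The only cosmetic difference is that the paper evaluates $F(\cE)$ and $G(\cE)$ directly from the integral definitions~\eqref{eq:AB} via the parametrization $\theta\mapsto(r\aa\cos\theta,r\beta\sin\theta)$, whereas you use the equivalent area formulas from Lemma~\ref{lem:FG2}; the outcomes ($G(\cE)=\pi\aa\beta$, $F(\cE)=\pi\aa\beta/\max v_{\cE}=\pi\gamma\aa\beta$) agree.
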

\begin{proof}
	The torsion function for the ellipse $\cE$ is explicitly given by the expression (see \eg~\cite[Example 1]{FH})
\begin{equation}\label{eq:torsion-E}
v_\cE(x,y) = \frac{1}{\gamma}\left(1-\frac{x^2}{\aa^2}-\frac{y^2}{\beta^2}\right),\qquad\text{where}\,\,
\gamma := \frac{2}{\aa^2}+\frac{2}{\beta^2}.
\end{equation}
In particular, we have $\max v_\cE = \frac{1}{\gamma}$.
The level curve $\cA_r$ in~\eqref{eq:Ar} associated with the ellipse is parameterized by
\[
\cA_r = \left\{x\in\cE\colon v_\cE(x,y) = \frac{1}{\gamma}(1-r^2)\right\} = \left\{(x,y)\in\dR^2\colon \frac{x^2}{\aa^2}+\frac{y^2}{\beta^2} = r^2\right\},\qquad r\in(0,1). \]
In other words, the set $\cA_r$ is the boundary of an ellipse with semi-axes $r\aa$ and $r\beta$. It can be parameterized by
\begin{equation}\label{eq:parametrization}
[0,2\pi) \ni \tt \mapsto (r\aa\cos\tt,r\beta\sin\tt)^\top.
\end{equation} 
The length of the gradient of the torsion function $v_\cE$
is explicitly given by
\[|\nb v_\cE(x,y)| = \frac{2}{\gamma}\sqrt{\frac{x^2}{\aa^4} + \frac{y^2}{\beta^4}}.\]
The constants $F(\cE)$ and $G(\cE)$ defined as in~\eqref{eq:AB} can also be explicitly computed.
Using the parametrization~\eqref{eq:parametrization}
and the above formula for $|\nb v_\cE|$ we find
\[
\begin{aligned}
F(\cE) &= \esinf_{r\in(0,1)} \int_0^{2\pi}
\frac{\gamma\aa\beta}{2}\frac{\sqrt{r^2\aa^2\sin^2\tt + r^2\beta^2\cos^2\tt}}{\sqrt{r^2\beta^2\cos^2\tt  + r^2\aa^2\sin^2\tt}}\dd\tt = \pi \gamma\aa\beta,\\
G(\cE) &= \essup_{r\in(0,1)} \int_0^{2\pi}
\frac{2}{\gamma\aa\beta r^2}\left(r^2\aa^2\sin^2\tt + r^2\beta^2\cos^2\tt\right)\dd \tt = \frac{2\pi}{\gamma\aa\beta}(\aa^2+\beta^2) = \pi\aa\beta.
\end{aligned}
\]
It follows then from the bound in Theorem~\ref{thm:main} that
\[\mu_1^b(\cE) \le  \mu_1^b(\cB_{2/\sqrt{\gamma}}),\]
which is the inequality~\eqref{eq:ellipse1}.   In view of $\frac{2}{\sqrt{\gamma}} < \sqrt{\aa\beta}$ (for $\alpha\not=\beta$),   the inequality~\eqref{eq:ellipse2} follows from~\eqref{eq:ellipse1} combined with the monotonicity stated in Proposition~\ref{prop:disk}\,(ii).	
\end{proof}

The inequality  \eqref{eq:ellipse1} we get for the ellipse can be alternatively derived via a scaling argument.
In the setting of Theorem~\ref{thm:ellipse},  the 
vector potential of the magnetic field \(\bA\) in \eqref{eq:vector_potential} is expressed as follows,
\[ \bA(x,y)=\frac{b\alpha^2\beta^2}{\alpha^2+\beta^2}\begin{pmatrix}
-\frac{y}{\beta^2}\\ 
\frac{x}{\alpha^2}
\end{pmatrix}.\] 
We perform the change of variables  \(x=\frac{\alpha}{\sqrt{\alpha\beta}} X\) and
 \(y=\frac{\beta}{\sqrt{\alpha\beta}} Y\) in the quadratic form in~\eqref{eq:form},  which maps the disk
 \(\cB=\cB_R\) with \(R=\sqrt{\alpha\beta}\)
 onto the ellipse
  $\cE_{\aa,\beta}$.  Then
 we have
\[ 
	\mu_1^b(\cE_{\alpha,\beta})=\inf\left\{ \hat \frq[u]\colon \int_{\cB}|u|^2\dd X\dd Y=1, u\in H^1(\cB)\right\},
\]
and
\[ 
\hat \frq[u]=\alpha\beta\int_{\cB}\left(\Bigl|\Bigl(\frac1{\alpha}\partial_X+\ii \frac{b\alpha\beta Y}{\beta(\alpha^2+\beta^2)} \Bigr)u\Bigr|^2+\Bigl|\Bigl(\frac1{\beta}\partial_Y-\ii \frac{b\alpha\beta X}{\alpha(\alpha^2+\beta^2)}\Bigr)u \Bigr|^2\right)\dd X\dd Y.
\]
We use as a trial function \(u(r,\tt)=f(r)\)
being the normalized radial ground state of
$\sfH_{b}^\cB$.  We get,
\[\begin{aligned}
\hat \frq[u]&=\alpha\beta\int_0^{2\pi}\int_0^R
\Bigl(\frac{\cos^2\theta}{\alpha^2}+\frac{\sin^2\theta}{\beta^2} \Bigr)\left(|f'(r)|^2+\frac{\alpha^2\beta^2 b^2r^2}{(\alpha^2+\beta^2)^2}|f(r)|^2\right)r\dd r\dd \theta\\
&=\pi \frac{\alpha^2+\beta^2}{\alpha\beta}\int_0^R\left(|f'(r)|^2+\frac{\alpha^2\beta^2 b^2r^2}{(\alpha^2+\beta^2)^2}|f(r)|^2\right)r\dd r\\
&=\frac{\alpha^2+\beta^2}{2\alpha\beta}\int_0^{2\pi}\int_0^R\left(|f'(r)|^2+\frac{4\alpha^2\beta^2}{(\alpha^2+\beta^2)^2}\frac{b^2r^2}{4}|f(r)|^2\right)r\dd r\dd\theta,
\end{aligned}\]
where we used that
\[\int_0^{2\pi}\cos^2\tt\dd\theta=\int_0^{2\pi}\sin^2\tt\dd\theta=\pi. \]
Consequently,    we get that 
\begin{equation}\label{eq:scaling-qf}
\hat \frq[u]\leq \frac{\alpha^2+\beta^2}{2\alpha\beta}\mu_1^{\hat b}(\cB_R),
\end{equation}
where
\[ \hat b= \frac{2\alpha\beta}{\alpha^2+\beta^2}b.\]
By scaling,  we have
\[\mu_1^{\hat b}(\cB_R)=\frac{2\alpha \beta }{\alpha^2+\beta^2}\mu_1^b(\cB_{2/\sqrt{\gamma}}), \]
with  \(\gamma\) from \eqref{eq:torsion-E}.  So we infer from \eqref{eq:scaling-qf} that
\[ \mu_1^b(\cE_{\alpha,\beta})\leq \mu_1^b(\cB_{2/\sqrt{\gamma}}),\]
which is the inequality in \eqref{eq:ellipse1}.

\subsection*{Acknowledgement} This work was initiated when the first author (A.K.) visited the Nuclear Physics Institute,  Czech Academy of Sciences,  Prague, Czech Republic.  A.K.  was partially supported by The Chinese University of Hong Kong,  Shenzhen (grant UDF01003322),  and    the Knut and Alice Wallenburg Foundation (grant KAW 2021.0259).
The second author (V.L.) acknowledges the support by 
the grant No.~21-07129S of the Czech Science Foundation (GA\v{C}R).
\newcommand{\etalchar}[1]{$^{#1}$}

\end{document}